\documentclass[12pt]{amsart}
\setlength{\textheight}{53pc}
\setlength{\textwidth}{38pc}

\usepackage{amssymb, amsmath, mathtools}
\usepackage{url}

\newtheorem{theorem}{Theorem}[section]
\newtheorem{corollary}[theorem]{Corollary}
\newtheorem{prop}[theorem]{Proposition}

\theoremstyle{remark}

\theoremstyle{definition}
\newtheorem{defn}[theorem]{Definition}
\newtheorem{example}[theorem]{Example}

\numberwithin{equation}{section}
\numberwithin{theorem}{section}

\newcommand{\Psif}{\Psi_{\!f}}
\newcommand{\Phif}{\Phi_{\!f}}

\let\abs=\envert

\newcommand{\fhat}{\hat f}
\newcommand{\ghat}{\hat g}

\newcommand{\hhat}{\hat h}
\newcommand{\psihat}{\hat\psi}
\newcommand{\psiahat}{\widehat{\psi_a}}
\newcommand{\phihat}{\hat \phi}

\newcommand{\intinf}{\int^\infty_{-\infty}}

\newcommand{\N}{{\mathbb N}}
\newcommand{\R}{{\mathbb R}}
\newcommand{\C}{{\mathbb C}}

\newcommand{\Sc}{{\mathcal S}(\R)}

\newcommand{\fn}{\!:\!}

\providecommand{\abs}[1]{\lvert#1\rvert}
\providecommand{\norm}[1]{\lVert#1\rVert}
\providecommand{\Lany}[1]{L^{#1}(\R)}
\newcommand{\fonehat}{\hat{f_1}}
\newcommand{\ftwohat}{\hat{f_2}}
\newcommand{\intin}{\int_{-1}^1}
\newcommand{\intout}{\int_{\abs{t}>1}}

\begin{document}
\subjclass[2020]{Primary 42A38, 26A42;
Secondary 46B04, 46F12}

\keywords{Fourier transform, bounded measurable function, continuous primitive integral,
distributional Denjoy integral, distributional Henstock--Kurzweil integral,
tempered distribution,
generalised function,
Banach space, convolution, Lebesgue space.
}

\date{Preprint December 12, 2025.}

\title[Fourier transforms of bounded functions]{Fourier transforms of bounded functions}
\author{Erik Talvila}
\address{Department of Mathematics \& Statistics\\
University of the Fraser Valley\\
Abbotsford, BC Canada V2S 7M8}
\email{Erik.Talvila@ufv.ca}

\begin{abstract}
The Fourier transform of a bounded measurable function, $f$, on the real line is shown to be
the second distributional derivative
of a H\"older continuous function.  The Fourier
transform is written as the difference of $\int_{-1}^1 e^{-ist}f(t)\,dt$ and
the second distributional derivative of the integral 
$\int_{\lvert{t}\rvert>1}e^{-ist}f(t)\,dt/t^2$. The 
space of such Fourier transforms is isometrically isomorphic
to $L^\infty(\mathbb{R})$.
There is an exchange theorem, inversion
and convolution results.  The Fourier transform of the functions
$x\mapsto\cos^m(a/x)$
for each natural number $m$ are computed.  Also for 
$x\mapsto x\sin(a/x)$ and $x\mapsto\arctan(x/a)$.
\end{abstract}

\maketitle

\section{Introduction}\label{sectionintroduction}

If $f$ is a bounded and Lebesgue measurable function on the real line then the 
Fourier transform is  shown to be the second distributional derivative of
a H\"older continuous function.  The Fourier transform is written as the difference of 
the integrals
$\intin e^{-ist}f(t)\,dt$ and $\intout e^{-ist}f(t)\,dt/t^2$.  The first integral
exists as usual. The second is differentiated twice with the distributional derivative.
It is shown that this second integral is  H\"older continuous but need not be
Lipschitz continuous.  This formulation gives more information about the Fourier
transform than the exchange formula definition for tempered distributions, 
$\langle \fhat,\phi\rangle=\langle f,\phihat\rangle$,
where $\phi$ is a test function.  This leads to an exchange formula valid when
$\phi'$ need only be of bounded variation.  Inversion in
norm with a summability kernel follows as well as analogues of the usual convolution
results.  In this case, the Fourier transform need not have any pointwise values.
It is integrable in the sense of distributional integrals, which convert to
Stieltjes integrals.

If $f\in L^1(\R)$ then its Fourier transform is $\fhat(s)=\intinf e^{-ist}f(t)\,dt$
for $s\in\R$. The Fourier transform has the following well known properties:
\begin{align}
&\norm{\fhat}_\infty\leq\norm{f}_1\label{FT1}\\
&\fhat \text{ is uniformly continuous on } \R \label{FT2}\\
&\lim_{\abs{s}\to\infty}\fhat(s)=0 \text{ (Riemann--Lebesgue lemma). }\label{FT3}
\end{align}

Background on Fourier transforms in $L^1(\R)$ can be found, for example, in \cite{folland,
grafakosclassical, steinweiss}.

If $f\in L^p(\R)$ for some $1<p\leq 2$ then the defining integral need not exist but
the Fourier transform can be defined using interpolation methods 
\cite{folland, grafakosclassical,
steinweiss}.  It then happens that the Fourier transform maps $L^p(\R)$ into $L^q(\R)$,
where $1/p+1/q=1$.  In the case $p=2$ the Fourier transform is an isometric
isometry on $L^2(\R)$.

If $f\in L^p(\R)$ for some $2<p\leq\infty$ then interpolation methods do not apply.
The Fourier transform can be defined as a tempered distribution via the exchange
formula $\langle \fhat,\phi\rangle=\langle f,\phihat\rangle$, for all test functions
$\phi\in\Sc$, where the Schwartz space of test functions consists 
of the $C^\infty(\R)$ functions such
that, for all integers $m,n\geq 0$, $x^m\phi^{(n)}(x)\to0$ as $\abs{x}\to\infty$.
See, for example, \cite{donoghue, folland, friedlanderjoshi,
steinweiss}.
In this case the Fourier transform is a tempered distribution but need not be a function \cite[4.13, p.~34]{steinweiss}
and the Fourier transform need not be a bounded linear operator to 
any $L^p(\R)$ space \cite{abdelhakim}.
In \cite{talvilaLpFourier} it was shown for $1<p<\infty$ that the Fourier transform is the distributional
derivative of a H\"older continuous function.

In this paper, Fourier transforms are defined for bounded measurable functions using the
second distributional derivative.  If $f\in L^\infty(\R)$ then write $f=f_1+f_2$ where
$f_1=f\chi_{[-1,1]}$ and $f_2=f\chi_{\R\setminus[-1,1]}$.  Then 
$f_1\in L^1(\R)$ and $\fonehat$ is defined with the usual integral.
Notice that the integral
\begin{equation}
\Psif(s)=\intout e^{-ist}f(t)\frac{dt}{t^2}\label{Psifintegral}
\end{equation}
exists.  The second distributional derivative is used to define $\ftwohat=-\Psif''$.  
Thus, $\langle \Psif'',\phi\rangle=\langle\Psif,\phi''\rangle=\intinf\Psif(s)\phi''(s)\,ds$
for all $\phi\in\Sc$.  The Fourier transform is then $\fhat=\fonehat+\ftwohat$.
If $f\in L^1(\R)$, by dominated convergence, the second pointwise derivative gives
$\Psif''(s)=\ftwohat(s)$ for each $s\in\R$.

The outline of the paper is as follows.

Properties of the function $\Psif$ are
given in Theorem~\ref{theoremPsif}, including pointwise growth estimates.  It is
shown that $\Psif$ is H\"older continuous but need not be Lipschitz continuous.
Since the Fourier transform of $f\chi_{[-1,1]}$ is analytic, the Fourier transform
of $f$ is then the second distributional derivative of a H\"older continuous function
$\Phif$
(Theorem~\ref{theoremFT}).  

In Theorem~\ref{theorembanach}, norms are introduced so
that $L^\infty(\R)$, the space of Fourier transforms of $L^\infty(\R)$ functions,
and the space of all $\Phif$, are all isometrically isomorphic.  

Fourier transforms
of functions in $L^\infty(\R)$ are distributions that can be integrated on $\R$ when
multiplied by functions whose derivative is of bounded variation.  The integral of
such a distribution (second derivative of a continuous function)  is defined as a 
second order distributional integral in \cite{talvilaacrn} and this reduces to a
first order distributional integral \cite{talviladenjoy}, which in turn can
be evaluated in terms of Stieltjes integrals.  See Proposition~\ref{propA2}.
The exchange formula $\langle\fhat,g\rangle=\langle f,\ghat\rangle$, 
Theorem~\ref{theoremexchange},
then holds for $f\in L^\infty(\R)$ and functions $g$ whose Fourier transform is in
$L^1(\R)$.  Sufficient conditions are given for this to hold, essentially that
$g'$ be of bounded variation.  This then extends the exchange definition of the
Fourier transform of a tempered distribution, for which $g$ is taken to be a
Schwartz function, i.e., much smoother with more rapid decay at infinity.

Various examples are given in Section~\ref{sectionexamples}., including a function
$f\in L^\infty(\R)$ such that the integral $\intinf e^{-ist}f(t)\,dt$ diverges 
for each $s\in\R$.  Fourier transforms of the functions $x\mapsto\cos^m(a/x)$
for each $m\in\N$ are computed.  Similarly for $x\mapsto x\sin(a/x)$ and $x\mapsto\arctan(x/a)$.

An inversion theorem,
using convolution with a family of kernels whose Fourier transforms
are in $L^1(\R)$, is given in Theorem~\ref{theoreminversion}.  This leads to pointwise
inversion almost everywhere, or within the norm $\norm{\cdot}_\infty$,
if $f$ is also uniformly continuous.

In Section~\ref{sectionconvolution},
weak type convolution results $\intinf \widehat{f\ast g}h=\intinf\fhat\ghat\,h$
and $\intinf \fhat g\ast h=\intinf f(t)\ghat(t)\hhat(t)\,dt$ are proved for
$f\in L^\infty(\R)$ and
modest conditions on the functions $g$ and $h$.

Functions of bounded variation are used in Sections~\ref{sectionexchange}, \ref{sectioninversion}
and \ref{sectionconvolution}.  Notice that if $g$ is absolutely continuous then the variation
of $g$ is given by ${\rm var}(g)=\intinf\abs{g'(t)}\,dt$.  Note that functions of
bounded variation are bounded.  For background on functions
of bounded variation see, for example, \cite{appell}.

\section{The Fourier transform}

The key to the definition of the Fourier transform of a bounded function is the function
$\Psif$.  It's growth and smoothness properties are investigated in the first theorem.

A function $g\fn\R\to\R$ is H\"older continuous on $\R$ with exponent $0<\alpha\leq 1$ if
there is a constant $k$ such that
$\abs{f(x+h)-f(x)}\leq k\abs{h}^\alpha$ for all $h,x\in\R$.  If $\alpha=1$ the function is
Lipschitz continuous; which entails that $f$ is absolutely continuous, the pointwise 
derivative exists almost everywhere and is bounded.

\begin{theorem}\label{theoremPsif}
Let $f\in L^\infty(\R)$.  
Let $F(x)=f(x)\chi_{\R\setminus[-1,1]}/x^2$.
Let $\Psif(s)=\hat{F}(s)$.
Then $\Psif$ has the following properties.\\
(a) $\norm{\Psif}_\infty\leq\norm{F}_1$,\\
(b) $F\in L^r(\R)$ for each $1/2<r\leq\infty$,
$\Psif\in L^q(\R)$ for each $2\leq q\leq\infty$ and there is a constant $C_p$
such that $\norm{\Psif}_q\leq C_p\norm{F}_p$ where $1/p+1/q=1$,\\
(c) $\lim_{\abs{s}\to\infty}\Psif(s)=0$,\\
(d) $\Psif$ is uniformly continuous on $\R$.\\
(e) Let $h\in\R$ such that $0<\abs{h}<1/e$.  Then $\abs{\Psif(s+h)-\Psif(s)}
\leq 6\norm{f}_\infty\abs{h}\abs{\log\abs{h}}$ for all $s\in\R$.  It follows
that $\Psif$ is H\"older continuous for each positive exponent less than one.\\
(f) For each $s\in\R$ the estimate in (e) is sharp in the sense that if $\psi\fn\R\to(0,\infty)$
and $\psi(h)=o(h\log\abs{h})$ as $h\to0$ then there is $f\in L^\infty(\R)$ such
that $(\Psif(s+h)-\Psif(s))/\psi(h)$ is not bounded as $h\to0$.  It follows
that $\Psif$ need not be Lipschitz continuous.
\end{theorem}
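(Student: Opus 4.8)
The plan is to reduce parts (a), (c), (d) to the classical $\Lany{1}$ theory once we observe that $F\in\Lany{1}$, to obtain (b) from the Hausdorff--Young inequality, to prove the modulus bound (e) by a direct split of the defining integral, and finally to establish the sharpness (f) by exhibiting a single explicit $f$ whose increment is genuinely of order $\abs{h}\abs{\log\abs{h}}$. First I would record the integrability of $F$: since $\abs{F(t)}\le\norm{f}_\infty/t^2$ for $\abs{t}>1$ and $F$ vanishes on $[-1,1]$, we have $\norm{F}_1\le 2\norm{f}_\infty$, so $F\in\Lany{1}$ and $\Psif=\Fhat$ is a genuine function. Part (a) is then \eqref{FT1}, part (c) is the Riemann--Lebesgue lemma \eqref{FT3}, and part (d) is \eqref{FT2}. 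For (b), the same computation gives $\int_{\abs{t}>1}\abs{F}^r\le\norm{f}_\infty^r\int_{\abs{t}>1}t^{-2r}\,dt<\infty$ precisely when $r>1/2$, while $\norm{F}_\infty\le\norm{f}_\infty$; hence $F\in\Lany{r}$ for $1/2<r\le\infty$. In particular $F\in\Lany{p}$ for every $1\le p\le 2$, and the Hausdorff--Young inequality \cite{grafakosclassical,steinweiss} yields $\Psif\in\Lany{q}$ with $\norm{\Psif}_q\le C_p\norm{F}_p$ for the conjugate exponent $q\in[2,\infty]$, the endpoint $q=\infty$ being (a).

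For (e) I would write the increment as
\begin{equation*}
\Psif(s+h)-\Psif(s)=\intout \est\bigl(e^{-iht}-1\bigr)f(t)\,\frac{dt}{t^2},
\end{equation*}
so that, using $\abs{e^{-iht}-1}\le\min\{2,\abs{h}\abs{t}\}$ together with the evenness in $t$ of the integrand's modulus,
\begin{equation*}
\abs{\Psif(s+h)-\Psif(s)}\le 2\norm{f}_\infty\Bigl(\int_1^{1/\abs{h}}\abs{h}\,\frac{dt}{t}+\int_{1/\abs{h}}^\infty\frac{2\,dt}{t^2}\Bigr)=\norm{f}_\infty\bigl(2\abs{h}\abs{\log\abs{h}}+4\abs{h}\bigr).
\end{equation*}
When $0<\abs{h}<1/e$ we have $\abs{\log\abs{h}}>1$, whence $4\abs{h}<4\abs{h}\abs{\log\abs{h}}$ and the asserted bound $6\norm{f}_\infty\abs{h}\abs{\log\abs{h}}$ follows. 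Since $\abs{h}^{1-\alpha}\abs{\log\abs{h}}\to0$ as $h\to0$ for every $0<\alpha<1$, this modulus is $O(\abs{h}^{\alpha})$ near $0$, while boundedness of $\Psif$ (from (a)) controls large $\abs{h}$; hence $\Psif$ is H\"older continuous of every exponent $\alpha<1$.

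The substantive point, and the one I expect to be the main obstacle, is the sharpness in (f): one must produce an $f$ for which the logarithm is actually attained rather than merely dominated. It suffices to find a single $f\in\Lany{\infty}$ with $\limsup_{h\to0}\abs{\Psif(s+h)-\Psif(s)}/(\abs{h}\abs{\log\abs{h}})>0$, since then for any $\psi$ with $\psi(h)=o(h\log\abs{h})$ the quotient $(\Psif(s+h)-\Psif(s))/\psi(h)$ is automatically unbounded as $h\to0$. The key observation is that the logarithm comes from the odd, order-$\abs{h}$ part $-i\sin(ht)$ of $e^{-iht}-1$, not from the even, order-$h^2t^2$ part $\cos(ht)-1$; aligning $f$ with the former is exactly what makes the borderline-divergent integral $\int dt/t$ appear. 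Concretely I would take $f(t)=-i\,\sgn(t)\,e^{ist}$, which is unimodular, so that $\est f(t)=-i\,\sgn(t)$ and
\begin{equation*}
\Psif(s+h)-\Psif(s)=-i\intout\bigl(e^{-iht}-1\bigr)\sgn(t)\,\frac{dt}{t^2}=-2\int_1^\infty\frac{\sin(ht)}{t^2}\,dt,
\end{equation*}
the $\cos(ht)-1$ contribution vanishing by oddness. The substitution $u=\abs{h}t$ gives $\bigl\lvert\int_1^\infty\sin(ht)t^{-2}\,dt\bigr\rvert=\abs{h}\bigl(\log(1/\abs{h})+O(1)\bigr)$ as $h\to0$, so $\abs{\Psif(s+h)-\Psif(s)}\sim 2\abs{h}\abs{\log\abs{h}}$, which forces the quotient by $\psi$ to blow up; a real-valued example is obtained by taking the real or imaginary part of this $f$. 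Finally, Lipschitz continuity would force the modulus to be $O(\abs{h})$, which the last display contradicts, so $\Psif$ need not be Lipschitz.
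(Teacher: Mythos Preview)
Your proof is correct.  Parts (a)--(d) match the paper exactly, and your proof of (e) is the same idea with a cosmetically different split: you cut the integral at $t=1/\abs{h}$ and use $\abs{e^{-iht}-1}\le\min\{\abs{ht},2\}$ directly, whereas the paper first writes $\abs{1-e^{-iht}}=2\abs{\sin(ht/2)}$, substitutes $u=\abs{h}t/2$, and then splits at $u=1$.  Both arrive at $2\norm{f}_\infty\abs{h}\abs{\log\abs{h}}+O(\abs{h})\le 6\norm{f}_\infty\abs{h}\abs{\log\abs{h}}$ for $\abs{h}<1/e$.

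The genuine difference is in (f).  The paper argues abstractly: it introduces the operators $L_{s,h}[f]=(\Psif(s+h)-\Psif(s))/\psi(h)$ on $L^\infty(\R)$, which are bounded for each fixed $(s,h)$ by (e), and then tests on the unimodular functions $f_{s,h}(x)=e^{isx}(1-e^{ihx})/\abs{1-e^{ihx}}$ to show that $\sup_h\norm{L_{s,h}}=\infty$; the Uniform Boundedness Principle then supplies a single $f$ on which the quotient blows up.  You instead give a direct construction, taking $f(t)=-i\,\sgn(t)e^{ist}$ and computing $\Psif(s+h)-\Psif(s)=-2\int_1^\infty\sin(ht)\,t^{-2}\,dt\sim -2h\log(1/\abs{h})$ explicitly.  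Your route is more concrete and yields an explicit extremiser (and even the exact asymptotic constant), while the paper's route avoids the asymptotic computation and would adapt unchanged to other settings where an explicit extremiser is less obvious.  The throwaway remark that ``a real-valued example is obtained by taking the real or imaginary part'' is a little quick---$\Psi_{\mathrm{Re}\,f}$ and $\Psi_{\mathrm{Im}\,f}$ are themselves complex---but it is salvaged by the triangle inequality: if both had bounded quotient by $\psi(h)$ then so would $\Psif$, contrary to what you proved.
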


\begin{proof}
Parts (a), (c) and (d) follow as in \eqref{FT1}, \eqref{FT2} and \eqref{FT3}.

The Hausdorff--Young--Babenko--Beckner inequality gives (b).
For the value of the constant $C_p$, see \cite{liebloss}.
Note that $C_1=1$.

To prove (e) note that
\begin{eqnarray*}
\abs{\Psif(s+h)-\Psif(s)} & = & \left|\intout e^{-ist}\left(1-e^{-iht}\right)f(t)\frac{dt}{t^2}
\right|\leq
2\norm{f}_\infty\abs{h}\int_{\abs{h}/2}^\infty\abs{\sin(t)}\frac{dt}{t^2}\\
 & \leq &  2\norm{f}_\infty\abs{h}\left(\int_{\abs{h}/2}^1 \frac{dt}{t} +\int_1^\infty\frac{dt}{t^2}
\right)
\leq 6\norm{f}_\infty\abs{h}\abs{\log\abs{h}}.
\end{eqnarray*}

For (f), define the family of linear operators $L_{s,h}\fn L^\infty(\R)\to \C$ by
$L_{s,h}[f]=(\Psif(s+h)-\Psif(s))/\psi(h)$.  The estimate in (e) shows $L_{s,h}$ is
a bounded linear operator for each $s,h\in\R$.  Let 
$f_{s,h}(x)=e^{isx}(1-e^{ihx})/\abs{1-e^{ihx}}$ with $f_{s,h}(x)=e^{isx}$ if $hx=0$.  
Then $\norm{f_{s,h}}_\infty=1$.  And,
$$
\frac{L_{s,h}[f_{s,h}]}{\norm{f_{s,h}}_\infty}  =  \frac{2\abs{h}}{\psi(h)}\int_{\abs{h}/2}
^\infty\abs{\sin(t)}
\frac{dt}{t^2}\geq \frac{4\abs{h}}{\pi\psi(h)}\int_{\abs{h}/2}^{\pi/2}\frac{dt}{t}\geq
\frac{\abs{h}\abs{\log\abs{h}}}{\pi\psi(h)}.
$$
This is not bounded as $h\to0$ so by the Uniform Boundedness Principle there is $f\in L^\infty(\R)$
such that $(\Psif(s+h)-\Psif(s))/\psi(h)$ is not bounded.  It follows that the growth condition is
sharp and $\Psif$ need not be Lipschitz continuous.
\end{proof}

The Fourier transform on the interval $[-1,1]$ is defined in the usual way since
if $f\in L^\infty(\R)$ then $f\in L^1([-1,1])$.  There is a corresponding function
whose second derivative gives $\fonehat$.

\begin{theorem}\label{theoremf1}
Let $f\in L^\infty([-1,1])$.\\
(a) Then $\norm{\fhat}_\infty\leq \norm{f}_1$.\\
(b) For each $1\leq p\leq 2$, $f\in L^p([-1,1])$ and $\norm{\fhat}_q\leq C_p\norm{f}_p$,
where $C_p$ is the constant in the Hausdorff--Young--Babenko--Beckner inequality,
and $q$ is conjugate to $p$.\\
(c) The Riemann--Lebesgue lemma holds, $\lim_{\abs{s}\to\infty}\fhat(s)=0$.\\
(d) $\fhat$ is analytic on $\C$.\\
(e) Let $\Omega_f(s)=\int_{0}^s\int_{0}^{\sigma_2}\fhat(\sigma_1)\,d\sigma_1\,d
\sigma_2$.  Let $v_s(t)=(1-ist-e^{-ist})/t^2$ with $v_s(0)=s^2/2$.  Then $\Omega_f(s)
=\int_{-1}^1v_s(t)f(t)\,dt$, $\Omega_f$ is analytic on $\C$ and 
$\fhat(s)=\Omega_f''(s)$ for each $s\in\R$.\\
(f) The estimate $\Omega_f(s)=O(s\log\abs{s})$, as $\abs{s}\to\infty$, is sharp in 
the sense that $\Omega_f(s)$ need not be $o(s\log\abs{s})$ as $\abs{s}\to\infty$.\\
(g) For each $s,h\in\R$,
\begin{eqnarray*}
\abs{\Omega_f(s+h)-\Omega_f(s)} & \leq &  \left\{\begin{array}{cl}
\norm{f}_\infty(2\abs{h}\abs{s}+h^2), & 0\leq \abs{s}\leq 1,\\
\norm{f}_\infty(2\abs{h}+4\abs{h}\log\abs{s}+h^2), & \abs{s}\geq 1.\\
\end{array}
\right.
\end{eqnarray*}
\end{theorem}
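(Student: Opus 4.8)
The plan is to treat (a)--(d) quickly and then build everything on the explicit formula in (e). Parts (a), (b) and (c) are immediate from $f\in L^1([-1,1])$ and reproduce \eqref{FT1}, the Hausdorff--Young--Babenko--Beckner inequality, and \eqref{FT3} on the compact interval. For (d), since $f$ is supported in $[-1,1]$ the integral $\int_{-1}^1 e^{-ist}f(t)\,dt$ converges for every $s\in\C$, the integrand being dominated by $e^{\abs{\mathrm{Im}\,s}}\norm{f}_\infty$ on $[-1,1]$; analyticity then follows from Morera's theorem together with Fubini (or by differentiating under the integral sign), so that $\fhat$ is entire.

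For (e) I would compute the double antiderivative directly. Writing $\fhat(\sigma_1)=\int_{-1}^1 e^{-i\sigma_1 t}f(t)\,dt$ and interchanging the order of integration twice (justified since the integrand is bounded on the compact region of integration), the inner $\sigma$-integrals are elementary and collapse to $\int_{-1}^1 f(t)\bigl(s/(it)+(e^{-ist}-1)/(it)^2\bigr)\,dt$. Using $(it)^2=-t^2$ and combining over the common denominator $t^2$ yields $\Omega_f(s)=\int_{-1}^1 v_s(t)f(t)\,dt$ with $v_s(t)=(1-ist-e^{-ist})/t^2$; the Taylor expansion of $e^{-ist}$ gives the removable value $v_s(0)=s^2/2$. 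Since $v_s(t)$ is entire in $s$ for each fixed $t\in[-1,1]$, with derivatives bounded uniformly in $t$, differentiation under the integral shows $\Omega_f$ is entire, and the fundamental theorem of calculus applied twice (using that $\fhat$ is continuous) gives $\fhat=\Omega_f''$.

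The estimates in (g) and the upper bound in (f) rest on the same idea, namely bounding $\int_{-1}^1\abs{v_s(t)}\,dt$ and $\int_{-1}^1\abs{v_{s+h}(t)-v_s(t)}\,dt$ by splitting the interval at $\abs{t}=1/\abs{s}$. For (g) the key algebraic step is the decomposition $v_{s+h}(t)-v_s(t)=(e^{-ist}-1)(1-e^{-iht})/t^2+v_h(t)$; the term $v_h(t)$ contributes the $h^2$ summand via $\abs{1-iht-e^{-iht}}\le h^2t^2/2$, while the first term is handled by $\abs{e^{-ist}-1}\le\min(2,\abs{s}\abs{t})$ and $\abs{1-e^{-iht}}\le\abs{h}\abs{t}$, the two cases $\abs{s}\le1$ and $\abs{s}\ge1$, and the region split producing exactly $2\abs{h}\abs{s}$, respectively $2\abs{h}+4\abs{h}\log\abs{s}$. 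The same splitting applied to $\abs{v_s(t)}\le(2+2\abs{s}\abs{t})/t^2$ on $\abs{t}\ge1/\abs{s}$ gives the $O(s\log\abs{s})$ bound of (f).

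For the sharpness in (f) I would avoid an abstract argument and exhibit an explicit witness, taking $f=\sgn$. Then the real part of $\Omega_f$ vanishes by oddness, and $\mathrm{Im}\,\Omega_f(s)=2\int_0^1(\sin(st)-st)/t^2\,dt=:2g(s)$. Differentiating, $g'(s)=\int_0^1(\cos(st)-1)/t\,dt=\int_0^s(\cos u-1)/u\,du=-\mathrm{Cin}(s)\sim-\log s$ as $s\to\infty$; integrating back gives $g(s)\sim-s\log s$, so $\Omega_f(s)\neq o(s\log\abs{s})$. I expect this asymptotic --- recognizing the cosine integral and controlling the lower-order terms well enough to isolate the $s\log s$ growth --- to be the main obstacle, the rest being routine once the formula in (e) is in hand.
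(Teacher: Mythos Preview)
Your proposal is correct and parts (a)--(e) and the sharpness in (f) match the paper's proof closely (the paper also takes $f=\sgn$ for sharpness, but after the change of variable $u=st$ it lower-bounds $\int_1^s (u-\sin u)/u^2\,du$ directly rather than differentiating and recognising $\mathrm{Cin}$). The one place where you take a genuinely different route is (g). The paper does not decompose $v_{s+h}-v_s$ algebraically; instead it applies Taylor's theorem to $\Omega_f$ itself, writing $\Omega_f(s+h)-\Omega_f(s)=\Omega_f'(s)h+\fhat(\xi)h^2/2$ for some $\xi$ between $s$ and $s+h$, then observes that $\Omega_f'(s)=\int_{-1}^1\frac{1-e^{-ist}}{it}f(t)\,dt$ and bounds $\int_0^{\abs{s}}\abs{\sin(t/2)}\,dt/t$ by the same splitting as in (f). Your identity $v_{s+h}-v_s=(e^{-ist}-1)(1-e^{-iht})/t^2+v_h$ is a nice alternative: it is purely algebraic, keeps everything at the level of the integrand, and avoids the minor awkwardness that the Lagrange remainder with a single point $\xi$ is not literally valid for complex-valued functions (though the uniform bound $\abs{\fhat}\le 2\norm{f}_\infty$ rescues the paper's estimate anyway). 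The paper's approach, on the other hand, makes the $h^2$ term immediate from $\Omega_f''=\fhat$ and reduces the work to a single first-derivative estimate already done in (f).
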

\begin{proof}
Parts (a) and (c) follow from \eqref{FT1}, \eqref{FT2} and \eqref{FT3}.

Part (b) as per (b) in Theorem~\ref{theoremPsif}.

Part (d) follows from dominated convergence.

In (e), use the Fubini--Tonelli theorem and then dominated convergence or
the fundamental theorem of calculus.  

To prove (f) note, 
let $s>1$ and
integrate to get
\begin{eqnarray*}
\abs{\Omega_f(s)} & \leq & s\int_{-s}^{s}\abs{v_1(t)}\abs{f(t/s)}\,dt\\
 & \leq & \norm{f}_\infty s\left(\int_{-1}^{1}\abs{v_1(t)}\,dt 
+2\int_1^s\left(\frac{2}{t^2}+\frac{1}{t}\right)dt\right)\\
 & \leq & \norm{f}_\infty s\left(\int_{-1}^1\abs{v_1(t)}\,dt+4
\left(1-\frac{1}{s}\right)+2\log(s)\right)\\
 & = & O(s\log(s)) \text{ as } s\to\infty.
\end{eqnarray*}
Similarly, as $s\to-\infty$.

To show this is sharp, let $f(x)={\rm sgn}(x)$.  If $s>1$ then
$$
\Omega_f(s)=-2is\left(\int_0^1\frac{t-\sin(t)}{t^2}dt+\int_1^s\frac{t-\sin(t)}{t^2}dt\right).
$$  
The integral $\int_0^1\frac{t-\sin(t)}{t^2}dt$ converges and
$\int_1^s\frac{t-\sin(t)}{t^2}dt\geq \int_1^s\frac{t-1}{t^2}dt\sim\log{s}$ as $s\to\infty$.

For (g), by Taylor's theorem there is $\xi$ between $s$ and $s+h$ so that
$$
\Omega_f(s+h)-\Omega_f(s)=\intin\frac{1-e^{-ist}}{it}f(t)\,dt\,h +\fhat(\xi)h^2/2.
$$
H\"older's inequality then shows
$$
\abs{\Omega_f(s+h)-\Omega_f(s)}  \leq 
4\norm{f}_\infty\int_0^{\abs{s}}\abs{\sin(t/2)}\frac{dt}{t}\abs{h}+\norm{f}_\infty h^2.
$$
Now use the method in the proof of (f).
\end{proof}

\begin{defn}\label{defnFT}
Let $f\in L^\infty(\R)$.  Write $f=f_1+f_2$ where $f_1=f\chi_{[-1,1]}$ and 
$f_2=f\chi_{\R\setminus[-1,1]}$.
Define $\fonehat(s)=\intin e^{-ist}f(t)\,dt$.
Let $\Psif(s)=\intout e^{-ist}f(t)\frac{dt}{t^2}$.  Define the Fourier transform of
$f_2$ by $\ftwohat=-\Psif''$, i.e., $\langle\ftwohat,\phi\rangle=-\langle\Psif'',\phi\rangle
=-\intinf\Psif(s)\phi''(s)\,ds$ for each $\phi\in\Sc$.
Define $\fhat=\fonehat+\ftwohat$.
\end{defn}

The results of Theorem~\ref{theoremPsif} and Theorem~\ref{theoremf1} give basic properties of
the Fourier transform.
\begin{theorem}\label{theoremFT}
Let $f\in L^\infty(\R)$. With Definition~\ref{defnFT}, there is a
continuous function $\Phif$ such that $\fhat=\Phif''$ and a constant $k$  
such that $\abs{\Phif(s+h)-\Phif(s)}
\leq k\norm{f}_\infty(\abs{h}\abs{\log\abs{h}}+\log(\abs{s}+1))$ for 
each $s\in\R$ and $0<\abs{h}\leq 1/e$.
The function $\Phif$ need not be Lipschitz continuous.
\end{theorem}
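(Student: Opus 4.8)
The plan is to exhibit $\Phif$ explicitly as the difference of the two antiderivative-type functions already constructed, namely $\Phif = \Omega_f - \Psif$, where $\Omega_f$ is the twice-integrated transform from Theorem~\ref{theoremf1}(e) applied to $f$ restricted to $[-1,1]$, and $\Psif$ is as in Theorem~\ref{theoremPsif}. Since Theorem~\ref{theoremf1}(e) gives $\fonehat = \Omega_f''$ pointwise and $\Omega_f$ is analytic with at most polynomial growth (Theorem~\ref{theoremf1}(f)), its distributional second derivative coincides with the classical one. Combining this with $\ftwohat = -\Psif''$ from Definition~\ref{defnFT} and linearity of the distributional derivative, I would check for each $\phi\in\Sc$ that
$$\langle \Phif'', \phi\rangle = \langle \Omega_f, \phi''\rangle - \langle \Psif, \phi''\rangle = \langle \fonehat, \phi\rangle + \langle \ftwohat, \phi\rangle = \langle \fhat, \phi\rangle,$$
so that $\fhat = \Phif''$. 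Continuity of $\Phif$ is immediate, since $\Omega_f$ is analytic (Theorem~\ref{theoremf1}(e)) and $\Psif$ is uniformly continuous (Theorem~\ref{theoremPsif}(d)).

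The modulus-of-continuity estimate would then follow from the triangle inequality $\abs{\Phif(s+h)-\Phif(s)} \leq \abs{\Omega_f(s+h)-\Omega_f(s)} + \abs{\Psif(s+h)-\Psif(s)}$ by inserting the two bounds already at hand. Theorem~\ref{theoremPsif}(e) controls the second summand by $6\norm{f}_\infty\abs{h}\abs{\log\abs{h}}$ for $0<\abs{h}\leq 1/e$. For the first summand I would feed in Theorem~\ref{theoremf1}(g): when $\abs{s}\leq 1$ the bound $\norm{f}_\infty(2\abs{h}\abs{s}+h^2)$ is at most $3\norm{f}_\infty\abs{h}$, and when $\abs{s}\geq 1$ the bound $\norm{f}_\infty(2\abs{h}+4\abs{h}\log\abs{s}+h^2)$ splits into a $3\norm{f}_\infty\abs{h}$ piece and $4\norm{f}_\infty\abs{h}\log\abs{s}$. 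Using $\abs{h}<1/e$ to absorb $h^2\leq\abs{h}\leq\abs{h}\abs{\log\abs{h}}$ and $4\abs{h}\log\abs{s}\leq 4\log(\abs{s}+1)$, these combine in both cases to $\abs{\Omega_f(s+h)-\Omega_f(s)}\leq 3\norm{f}_\infty\abs{h}\abs{\log\abs{h}}+4\norm{f}_\infty\log(\abs{s}+1)$. Adding the two estimates yields the stated inequality with $k=9$.

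For the final claim that $\Phif$ need not be Lipschitz, I would fix $s=0$ and apply Theorem~\ref{theoremPsif}(f) with $\psi(h)=\abs{h}$; this choice satisfies $\psi(h)=o(h\log\abs{h})$ because $\abs{h}/(\abs{h}\abs{\log\abs{h}})=1/\abs{\log\abs{h}}\to0$, so it produces an $f\in L^\infty(\R)$ for which $(\Psif(h)-\Psif(0))/\abs{h}$ is unbounded as $h\to0$. For that same $f$ the analyticity of $\Omega_f$ forces $(\Omega_f(h)-\Omega_f(0))/\abs{h}$ to remain bounded near $0$ (it tends to $\pm\Omega_f'(0)$), whence $(\Phif(h)-\Phif(0))/\abs{h}$ is unbounded and $\Phif$ fails to be Lipschitz at $0$.

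The only step that needs genuine care is the first: upgrading the pointwise identity $\fonehat=\Omega_f''$ to the statement $\fhat=\Phif''$ between distributions. This is precisely where the analyticity (hence $C^2$ smoothness, with sub-quadratic growth from Theorem~\ref{theoremf1}(f)) of $\Omega_f$ is essential, since it legitimizes integrating $\int\Omega_f\phi''$ by parts against the rapidly decaying $\phi\in\Sc$ with vanishing boundary terms. Once that identity is secured, the two remaining parts are routine assembly of the supplied estimates.
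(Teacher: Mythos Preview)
Your proposal is correct and matches the paper's approach exactly: the paper does not spell out a proof of this theorem beyond remarking that it follows from Theorems~\ref{theoremPsif} and~\ref{theoremf1}, and in Theorem~\ref{theorembanach} it explicitly sets $\Phif=\Omega_f-\Psif$, which is precisely the decomposition you use. Your assembly of the estimates, including the verification that $\psi(h)=\abs{h}$ is admissible in Theorem~\ref{theoremPsif}(f) and that the analyticity of $\Omega_f$ neutralises its contribution to the failure of Lipschitz continuity, fills in the details the paper leaves to the reader.
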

Note that the theorem says $\Phif$ is H\"older continous on compact sets, for each exponent
$\alpha$.

\section{Banach spaces}

The definition $\fhat=-\Psif''$ agrees with the tempered distribution definition,
$\langle \fhat,\phi\rangle=\langle f,\phihat\rangle$ for all test functions $\phi$.
This sets up isomorphisms between $L^\infty(\R)$, the set of functions $\Phif$ and
the space of Fourier transforms.

\begin{theorem}\label{theorembanach}
Let $f\in L^\infty(\R)$.
Using the notation of Theorem~\ref{theoremPsif} and Definition~\ref{defnFT} let
$\Phif=\Omega_f-\Psif$.  Write the Fourier transform as
$\fhat=\mathcal{F}[f]=\Omega_f''-\Psif''$.\\
(a) For each $\phi\in\Sc$, $\langle\fhat,\phi\rangle=\langle f,\phihat\rangle$.\\
(b) Let $\mathcal{A}_\infty(\R)=\{\fhat\mid f\in L^\infty(\R)\}$.
Define a norm on $\mathcal{A}_\infty(\R)$ by $\norm{\fhat}_{\mathcal{A}}=\norm{f}_\infty$.
Then $\mathcal{F}\fn L^\infty(\R)\to\mathcal{A}_\infty(\R)$ is an isometric isomorphism.\\
(c) Let $\mathcal{B}_\infty(\R)=\{\Phif\mid f\in L^\infty(\R)\}$.
Define a norm on $\mathcal{B}_\infty(\R)$ by $\norm{\Phif}_{\mathcal{B}}=\norm{f}_\infty$.
Then the three Banach spaces $L^\infty(\R)$, $\mathcal{A}_\infty(\R)$ and 
$\mathcal{B}_\infty(\R)$ are isometrically isomorphic.
\end{theorem}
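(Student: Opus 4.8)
The plan is to prove part (a) first, since once the new definition is shown to coincide with the tempered-distribution definition the remaining two parts follow almost formally. Recall from Definition~\ref{defnFT} and Theorem~\ref{theoremf1}(e) that $\fhat=\fonehat+\ftwohat=\Omega_f''-\Psif''=\Phif''$, so it suffices to verify the exchange formula $\langle\fhat,\phi\rangle=\langle f,\phihat\rangle$ on each summand and add. For the $f_1$ piece, $f_1=f\chi_{[-1,1]}\in L^1(\R)$ gives $\fonehat\in L^\infty(\R)$, so $\langle\fonehat,\phi\rangle=\intinf\fonehat(s)\phi(s)\,ds$; since $\abs{f(t)}\abs{\phi(s)}$ is integrable over $[-1,1]\times\R$, Fubini's theorem moves the $t$-integral outside and identifies the inner integral as $\phihat(t)$, yielding $\langle\fonehat,\phi\rangle=\intin f(t)\phihat(t)\,dt=\langle f_1,\phihat\rangle$.

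For the $f_2$ piece I unwind the distributional derivative: $\langle\ftwohat,\phi\rangle=-\langle\Psif,\phi''\rangle=-\intinf\Psif(s)\phi''(s)\,ds$. Substituting $\Psif(s)=\intout e^{-ist}f(t)\,dt/t^2$, the resulting double integral is absolutely convergent because $\intout\abs{f(t)}\,dt/t^2\le 2\norm{f}_\infty$ and $\phi''\in L^1(\R)$, so Fubini applies again. The inner integral is $\intinf e^{-ist}\phi''(s)\,ds=\widehat{\phi''}(t)$, and integrating by parts twice (the boundary terms vanish since $\phi\in\Sc$) gives $\widehat{\phi''}(t)=-t^2\phihat(t)$. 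The leading minus sign and the factor $-t^2$ combine with the weight $1/t^2$ to leave $\langle\ftwohat,\phi\rangle=\intout f(t)\phihat(t)\,dt=\langle f_2,\phihat\rangle$. Adding the two pieces proves (a).

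Parts (b) and (c) then reduce to injectivity. From (a), if $\fhat=0$ then $\langle f,\phihat\rangle=0$ for every $\phi\in\Sc$; since the Fourier transform is a bijection of $\Sc$ onto itself, $\phihat$ ranges over all of $\Sc$, so $f$ annihilates every Schwartz function and hence $f=0$ almost everywhere. Thus $\mathcal{F}$ is injective, which makes $\norm{\fhat}_{\mathcal{A}}=\norm{f}_\infty$ well defined; it is then an isometry by construction, linear because the Fourier transform is linear, and surjective onto $\mathcal{A}_\infty(\R)$ by definition, so it is an isometric isomorphism, and $\mathcal{A}_\infty(\R)$ is a Banach space because it is isometric to $L^\infty(\R)$. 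For (c), $\Phif=\Omega_f-\Psif$ is an honest function given by explicit formulas, so $f\mapsto\Phif$ is well defined; if $\Phif=0$ then $\fhat=\Phif''=0$, whence $f=0$ by the injectivity just established. The same reasoning shows $f\mapsto\Phif$ is an isometric isomorphism onto $\mathcal{B}_\infty(\R)$, and composing the two maps (equivalently, using $\Phif\mapsto\Phif''=\fhat$) identifies all three spaces isometrically.

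The main obstacle is contained entirely in part (a): I must ensure the two applications of Fubini are justified and that the double integration by parts is legitimate, so that $\phi\in\Sc$ makes every boundary contribution vanish and yields the clean identity $\widehat{\phi''}(t)=-t^2\phihat(t)$. The only delicate bookkeeping is the cancellation of the $1/t^2$ weight in $\Psif$ against the $-t^2$ produced by differentiating the exponential twice; once this is in place, the norm and completeness statements in (b) and (c) are purely formal consequences of injectivity.
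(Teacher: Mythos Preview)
Your proof is correct and follows essentially the same approach as the paper: Fubini plus two integrations by parts for the $f_2$ piece, then injectivity via the Schwartz-space isomorphism for (b) and (c). The only minor difference is that for the $f_1$ piece you invoke the classical $L^1$ exchange formula directly (using $\fonehat=\Omega_f''$ from Theorem~\ref{theoremf1}(e)), whereas the paper works with $\Omega_f$ itself and integrates $\langle\Omega_f,\phi''\rangle$ by parts in $s$ using $\partial_s^2 v_s(t)=e^{-ist}$; your route is slightly more direct and equally valid.
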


\begin{proof}
(a) Note that $v_s(t)=s^2v_1(st)$ so that $\norm{v_s}_\infty=s^2\norm{v_1}_\infty$.
Since $\partial^2v_s(t)/\partial s^2=e^{-ist}$, the Fubini--Tonelli theorem and
integration by parts shows $\langle \Omega_f'',\phi\rangle=\langle f_1,\phihat\rangle$
for all test functions $\phi$.  Similarly, 
$\langle \Psif'',\phi\rangle=-\langle f_2,\phihat\rangle$.  Hence, $\langle \fhat,\phi\rangle
=\langle f,\phihat\rangle$ for all $\phi\in\Sc$.

(b)
Clearly, $\mathcal{F}$ is linear.  By definition it is onto $\mathcal{A}_\infty(\R)$.
To show $\mathcal{F}$ is one-to-one, suppose $f\in L^\infty(\R)$ and $\fhat=0$.
By (a), 
$\langle f,\phihat\rangle=0$ for all $\phi\in\Sc$.  Since the Fourier transform is an
isomorphism on $\Sc$ it follows that $f=0$ as a tempered distribution and the
Fourier transform is an isometric isomorphism.

(c) The second distributional derivative provides a linear mapping of $\mathcal{B}_\infty(\R)$
onto $\mathcal{A}_\infty(\R)$. 
If $\Phi_f''=0$ then (b) shows $f=0$ and hence $\Phi_0=0$.
\end{proof}

Since the definition of $\fhat$ agrees with that for the Fourier transform of a tempered
distribution it has the usual distributional
properties.  For example, \cite[\S30]{donoghue} or \cite[2.3.22]{grafakosclassical}.

\section{Integration and exchange theorem}\label{sectionexchange}

The exchange formula, $\langle\hat{T},\phi\rangle =\langle T,\phihat\rangle$,
is the definition of the Fourier transform of a tempered distribution $T$,
where $\phi$ is a test function in $\Sc$.  For Fourier transforms of bounded functions
this formula holds for a wider class of test functions, whose derivative
is merely of bounded variation.

In Definition~\ref{defnFT} and Theorem~\ref{theoremPsif} it was seen that
$\ftwohat$ was the second distributional derivative
of the H\"older continuous function $\Psif$.  Because $\Psif$ has limit $0$ at
$\pm\infty$ it is continuous
on the extended real line.  The integral of $\ftwohat$ then exists in the
sense of second distributional integrals \cite{talvilaacrn}.
According to this theory, the second distributional derivative of $\Psif$ can
be integrated against a function whose derivative is of bounded variation.
Write $g(x)=\int_0^x h(t)\,dt$ where $h$ is a right continuous function of bounded
variation.  Then
\begin{eqnarray}
\intinf \Psif''g & = & -\intinf\Psif'h=-\lim_{t\to\infty}\Psif(t)h(t)+\lim_{t\to-\infty}
\Psif(t)h(t)+\intinf\Psif(t)\,dh(t)\label{Ac2integral}\\
 & = & \intinf\Psif(t)\,dh(t).\label{HKStieltjesintegral}
\end{eqnarray}
Since $h$ is bounded and $\Psif$ vanishes at $\pm\infty$ (Theorem~\ref{theoremPsif}(c)),
the limits in \eqref{Ac2integral} vanish.
The multipliers
are then functions that are the indefinite integral of a function of bounded variation.
The above integration
by parts formula gives the integral of $\Psif''$ in terms of a continuous primitive
integral \cite{talviladenjoy} (second integral in \eqref{Ac2integral}), 
which is in turn defined as a Stieltjes integral 
\eqref{HKStieltjesintegral}
\cite[p.~199]{mcleod}.   

The Fourier transform of $f_1$ is continuous so it is
locally Lebesgue integrable.

To distinguish between different types of integrals, Lebesgue integrals will explicitly
show the integration variable and differential in the integrand, as in 
\eqref{Psifintegral} and
\eqref{HKStieltjesintegral},
 and distributional
integrals will not show these, as in \eqref{Ac2integral}.

\begin{prop}\label{prop}\label{propA2}
Let $f\in L^\infty(\R)$.
Let $h$ be of bounded variation and right continuous.  Define
$g(x)=\int_0^xh(t)\,dt$.  Use the notation of Definition~\ref{defnFT}.  
Then $\intinf \ftwohat g=-\intinf\Psif'h=\intinf\Psif(t)\,dh(t)$.
\end{prop}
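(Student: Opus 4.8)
The plan is to recognise the asserted chain of equalities as an instance of the integration by parts calculus for distributional integrals, with the right-hand side understood as a Stieltjes integral. By Definition~\ref{defnFT} the distribution $\ftwohat=-\Psif''$ is a second distributional derivative of the continuous function $\Psif$, so $\intinf\ftwohat g$ is the second order distributional integral of \cite{talvilaacrn}, whose admissible multipliers are exactly the functions $g$ whose derivative $h=g'$ is of bounded variation. I would carry out two integrations by parts: the first to pass from the second order integral, for which $\ftwohat\in\alexx$, to a first order continuous primitive integral of $\Psif'\in\alexc$; the second to rewrite that first order integral as the Stieltjes integral $\intinf\Psif(t)\,dh(t)$. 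This is precisely the computation already displayed in \eqref{Ac2integral} and \eqref{HKStieltjesintegral}; the proposition simply records it as a formal statement once the hypotheses on $\Psif$ and $h$ are verified.

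First I would confirm that $\Psif$ meets the structural requirements of the second order integral. By Theorem~\ref{theoremPsif}(d) it is uniformly continuous on $\R$, and by Theorem~\ref{theoremPsif}(c) it has limit $0$ at $\pm\infty$; hence $\Psif$ extends continuously to $\Rbar$ and $\ftwohat\in\alexx$. Since $h$ is of bounded variation it is bounded, so $g(x)=\int_0^x h(t)\,dt$ is an admissible multiplier. The integration by parts formula of \cite{talvilaacrn} then transfers one derivative off $\Psif$ and yields $\intinf\ftwohat g=-\intinf\Psif' h$, now a first order continuous primitive integral of $\Psif'\in\alexc$ against the bounded variation function $h$.

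For the remaining equality I would invoke the integration by parts formula for the continuous primitive integral \cite{talviladenjoy}, which rewrites $-\intinf\Psif' h$ as a boundary contribution plus the Stieltjes integral $\intinf\Psif(t)\,dh(t)$ in the sense of \cite[p.~199]{mcleod}, exactly as in \eqref{Ac2integral}. The boundary contribution $-\lim_{t\to\infty}\Psif(t)h(t)+\lim_{t\to-\infty}\Psif(t)h(t)$ vanishes because $\Psif(t)\to0$ as $\abs{t}\to\infty$ (Theorem~\ref{theoremPsif}(c)) while $h$ remains bounded, and the surviving integral converges absolutely, being dominated by $\norm{\Psif}_\infty\,{\rm var}(h)$. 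I expect the only real difficulty to be bookkeeping rather than analysis: one must keep straight the three distinct notions of integral and verify at each stage that the relevant primitive is continuous and vanishes at $\pm\infty$, so that every boundary term drops out. No estimates beyond those in Theorem~\ref{theoremPsif} are required.
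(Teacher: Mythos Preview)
Your proposal is correct and follows exactly the approach of the paper: the paper's own proof is simply a reference to the relevant definitions in \cite{talvilaacrn} and \cite{talviladenjoy}, with the actual integration-by-parts computation displayed in the discussion preceding the proposition as \eqref{Ac2integral}--\eqref{HKStieltjesintegral}. You have merely spelled out in full the verification of hypotheses (via Theorem~\ref{theoremPsif}(c),(d)) and the vanishing of boundary terms that the paper leaves implicit.
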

\begin{proof}
See Definition~2.6 in \cite{talvilaacrn} and Definition~6 in \cite{talviladenjoy}.
\end{proof}

This integration formula leads to the exchange theorem, which is used to prove the
inversion theorem and integration of convolutions below.

\begin{theorem}[Exchange]\label{theoremexchange}
Let $f\in L^\infty(\R)$.  Let ($\alpha$) $g$ be absolutely continuous such that,
($\beta$) $g\in L^1(\R)$, ($\gamma$) $g'\in L^1(\R)$ and ($\delta$) $g'$ is of bounded variation.  Then $\ghat\in L^1(\R)$.
Using notation from Theorem~\ref{theoremPsif} and Definition~\ref{defnFT},
$\langle \fhat,g\rangle=\langle f,\ghat\rangle$.
\end{theorem}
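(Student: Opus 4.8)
The plan is to establish the two claims in turn: that ($\alpha$)--($\delta$) force $\ghat\in\Lone$, and that the exchange identity then holds after the splitting $f=f_1+f_2$ of Definition~\ref{defnFT}. For $\ghat\in\Lone$ I would use the two orders of regularity of $g$ to obtain quadratic decay. Since $g\in\Lone$, $\ghat$ is bounded with $\norm{\ghat}_\infty\le\norm g_1$, so only the behaviour at large $\abs s$ needs attention. As $g$ is absolutely continuous with $g'\in\Lone$ it has limits at $\pm\infty$, necessarily $0$ since $g\in\Lone$, and integration by parts gives $\widehat{g'}(s)=is\,\ghat(s)$. As $g'$ is of bounded variation and lies in $\Lone$ it too vanishes at $\pm\infty$, and a Stieltjes integration by parts gives $\widehat{g'}(s)=(is)^{-1}\intinf\est\,dg'(t)$, whence $\abs{\widehat{g'}(s)}\le{\rm var}(g')/\abs s$. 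Combining the two identities yields $\abs{\ghat(s)}\le{\rm var}(g')/s^2$ for $s\neq0$, and together with the bound near the origin this gives $\intinf\abs{\ghat(s)}\,ds\le 2\norm g_1+2\,{\rm var}(g')<\infty$.

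For the exchange identity I would write $\langle\fhat,g\rangle=\langle\fonehat,g\rangle+\langle\ftwohat,g\rangle$ and match each summand against the corresponding piece of $\langle f,\ghat\rangle=\langle f_1,\ghat\rangle+\langle f_2,\ghat\rangle$. The first is routine: $\fonehat$ is bounded and $g\in\Lone$, so $\langle\fonehat,g\rangle=\intinf\fonehat(s)g(s)\,ds$ converges absolutely, and since $f_1\in\Lone$ as well, Fubini's theorem turns this into $\intin f(t)\ghat(t)\,dt=\langle f_1,\ghat\rangle$.

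The real content is the second summand. After normalising $g'$ to be right continuous (which changes neither $g$ nor $\ghat$), Proposition~\ref{propA2} applies with $h=g'$ and expresses the distributional pairing $\langle\ftwohat,g\rangle$ as a Stieltjes integral of $\Psif$ against $dg'$; the additive constant by which $g$ differs from $\int_0^{(\cdot)}g'$ is harmless, since it pairs with the second derivative $\ftwohat$ to zero as $\Psif(\pm\infty)=0$. Recalling that $\Psif=\Fhat$ with $F(t)=f(t)\chi_{\R\setminus[-1,1]}(t)/t^2\in\Lone$, I would substitute the defining integral for $\Psif$ and interchange the $t$-integral with the finite Stieltjes measure $dg'$ (justified by $F\in\Lone$ and ${\rm var}(g')<\infty$). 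The inner integral is $\intinf\est\,dg'(s)=-t^2\ghat(t)$, by the same two integrations by parts as above, and the factor $t^2$ cancels the $1/t^2$ in $F$; the signs supplied by Proposition~\ref{propA2} and this cancellation combine to leave exactly $\intout f(t)\ghat(t)\,dt=\langle f_2,\ghat\rangle$. Adding the two summands gives $\langle\fhat,g\rangle=\langle f,\ghat\rangle$.

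The main obstacle is this last term: one must legitimately collapse the second-order distributional pairing $\langle\ftwohat,g\rangle$ to the ordinary Stieltjes integral $\intinf\Psif\,dg'$ --- exactly the content of Proposition~\ref{propA2} and the chain \eqref{Ac2integral}--\eqref{HKStieltjesintegral}, once the boundary terms at $\pm\infty$ are seen to vanish from $\Psif(\pm\infty)=0$ and the vanishing of $g$ and $g'$ at infinity --- and then apply Fubini against a Stieltjes rather than a Lebesgue measure. Establishing the decay $\abs{\ghat(s)}\le{\rm var}(g')/s^2$ and tracking the signs through $\intinf\est\,dg'(s)=-t^2\ghat(t)$ are the remaining points requiring care.
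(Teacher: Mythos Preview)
Your proposal is correct and follows essentially the same route as the paper: the splitting $f=f_1+f_2$, classical Fubini for the $f_1$ piece, and for the $f_2$ piece the reduction via Proposition~\ref{propA2} to a Stieltjes integral of $\Psif$ against $dg'$, followed by Fubini--Tonelli and two integrations by parts to recover $\intout f(t)\ghat(t)\,dt$. The only difference is cosmetic: you supply a self-contained proof of $\ghat\in\Lone$ via the quadratic decay estimate $\abs{\ghat(s)}\le{\rm var}(g')/s^2$, whereas the paper cites this from \cite[Proposition~7.2(c)]{talvilaLpFourier}.
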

\begin{proof}
The conditions ($\alpha$), ($\beta$) and ($\gamma$) imply $g$ is of bounded variation and 
$\lim_{\abs{s}\to\infty}g(s)=0$.  See also \cite[Lemma~2]{talvilafourieramm}.
There is a right continuous function $h\in L^1(\R)$ of bounded variation such that
$g(x)=\int_{-\infty}^xh(t)\,dt$.  It follows that $\lim_{\abs{s}\to\infty}h(s)=0$.
Since $h$ is right continuous, $g'$ can be defined at each point as $g'(x)=h(x+)$.
Proposition~7.2(c) in \cite{talvilaLpFourier} shows that $\ghat\in L^1(\R)$.

The formula $\intinf \fonehat(s)g(s)\,ds=\intin f(t)\ghat(t)\,dt$ holds
for all $g\in L^1(\R)$.

Using Proposition~\ref{propA2}, integrate to get
$$
\intinf \ftwohat g  =  -\intinf\Psif(s)\,dh(s) = -\intinf\intout e^{-ist}f(t)\frac{dt}{t^2}\,dh(s).
$$
Apply the Fubini--Tonelli theorem and integrate by parts twice to get
$$
\intinf \ftwohat g =\intout f(t)\ghat(t)\,dt.
$$
Then $\langle \fhat,g\rangle=\langle f,\ghat\rangle$.
\end{proof}
\begin{corollary}
Using notation from the proof of Theorem~\ref{theoremexchange},
$$
\abs{\langle \fhat, g\rangle}\leq \norm{f}_\infty\norm{\ghat}_1\leq
2\norm{f}_\infty(\norm{g}_1+{\rm var}(h)).
$$
\end{corollary}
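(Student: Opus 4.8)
The plan is to prove the two inequalities separately and then chain them. The first bound, $\abs{\langle\fhat,g\rangle}\le\norm{f}_\infty\norm{\ghat}_1$, is immediate from the exchange theorem just established: $\langle\fhat,g\rangle=\langle f,\ghat\rangle=\intinf f(t)\ghat(t)\,dt$, and since $f\in L^\infty(\R)$ while $\ghat\in L^1(\R)$, the elementary $L^\infty$--$L^1$ duality estimate $\abs{\intinf f\ghat}\le\norm{f}_\infty\norm{\ghat}_1$ completes this half with no further work.

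The substance is the second inequality, $\norm{\ghat}_1\le2(\norm{g}_1+{\rm var}(h))$, and the idea is to control $\ghat$ by two competing bounds and split the integral over $\R$ accordingly. A crude bound holds everywhere, namely $\abs{\ghat(s)}\le\norm{g}_1$, directly from $\ghat(s)=\intinf e^{-ist}g(t)\,dt$. For a bound that decays I would integrate by parts twice. Using $g(x)=\int_{-\infty}^x h(t)\,dt$, so that $g'=h$ almost everywhere, together with the fact (recorded in the proof of Theorem~\ref{theoremexchange}) that both $g$ and $h$ vanish at $\pm\infty$, a first integration by parts gives $is\,\ghat(s)=\hhat(s)$, the boundary term $[g(t)e^{-ist}]_{-\infty}^\infty$ vanishing since $g\to0$. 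A second integration by parts, now a Riemann--Stieltjes one against $dh$ with boundary term $[h(t)e^{-ist}]_{-\infty}^\infty=0$ since $h\to0$, yields
$$
s^2\ghat(s)=-\intinf e^{-ist}\,dh(t).
$$
Hence $\abs{s^2\ghat(s)}\le\intinf\abs{dh(t)}={\rm var}(h)$, that is, $\abs{\ghat(s)}\le{\rm var}(h)/s^2$ for $s\neq0$.

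With these two estimates in hand, the conclusion follows by splitting at $\abs{s}=1$:
$$
\norm{\ghat}_1=\int_{\abs{s}\le1}\abs{\ghat(s)}\,ds+\int_{\abs{s}>1}\abs{\ghat(s)}\,ds\le\int_{\abs{s}\le1}\norm{g}_1\,ds+\int_{\abs{s}>1}\frac{{\rm var}(h)}{s^2}\,ds=2\norm{g}_1+2\,{\rm var}(h),
$$
which is exactly the claimed bound. Chaining this with the first inequality gives the corollary.

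I expect the only delicate point to be the second (Stieltjes) integration by parts and the vanishing of its boundary term: this relies on $h$ being right continuous and of bounded variation with $\lim_{\abs{s}\to\infty}h(s)=0$, all of which were already established in the proof of Theorem~\ref{theoremexchange}, so no new estimates are needed there. One should also confirm that differentiating $e^{-ist}$ and integrating the result against $dh$ is legitimate, which is standard for right continuous functions of bounded variation. Notably, carrying out the integration by parts at the level of $s^2\ghat(s)$ (rather than estimating $\abs{1-e^{-ist}}\le2$ after one integration) is what produces the sharp constant $2$ on the ${\rm var}(h)$ term.
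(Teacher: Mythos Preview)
Your argument is correct and matches the paper's proof essentially line for line: integrate by parts twice to obtain $\ghat(s)=-s^{-2}\intinf e^{-ist}\,dh(t)$, then split $\norm{\ghat}_1$ at $\abs{s}=1$ using the bound $\norm{g}_1$ near the origin and ${\rm var}(h)/s^2$ away from it. The paper is terser about the first inequality and the vanishing boundary terms, but the content is identical.
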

\begin{proof}
For $s\not=0$ integrating by parts twice gives
$\ghat(s)=-\frac{1}{s^2}\intinf e^{-ist}\,dh(t)$. It follows that
$\norm{\ghat}_1\leq 2\int_0^1\norm{g}_1\,ds+2\int_1^\infty {\rm var}(h)\,ds/s^2$.
The result then follows.
\end{proof}

\section{Examples}\label{sectionexamples}

\begin{example}
Let $p$ be any measurable function on $\R$.  Define $f\in L^\infty(\R)$ by
$f(x)=e^{ip(x)}$.  Then $\Psif(s)=\intout e^{-ist}e^{ip(t)}\frac{dt}{t^2}$
and $\abs{\Psif(s)}\leq 2$.

Fourier transforms such as $\int_0^\infty e^{-ist}e^{it^\alpha}\,dt$ for $\alpha\in\R$
are then included in this rubric.
\end{example}

\begin{example}
Let $f(t)=e^{ixt}$ for $x\in\R$.  Then $\Psif(s)=2\abs{s-x}\int_{\abs{s-x}}^\infty
\cos(t)\,dt/t^2$.  It follows that $\Psif$ is absolutely continuous.
The almost everywhere pointwise derivative then coincides with the distributional
derivative.  For $s\not=x$ we have, after integration by parts, 
\begin{eqnarray*}
\Psif'(s) & = &  2\,{\rm sgn}(s-x)\int_{\abs{s-x}}^\infty\cos(t)\frac{dt}{t^2}
-\frac{2\cos(s-x)}{s-x}
 =  -2\,{\rm sgn}(s-x)\int_{\abs{s-x}}^\infty\sin(t)\frac{dt}{t}\\
 & = & -\pi\,{\rm sgn}(s-x)+2\int_0^1\sin[(s-x)t]\frac{dt}{t}.
\end{eqnarray*}

The second distributional derivative is then given using the Dirac distribution supported at $x$,
$$
\Psif''(s)=-2\pi\delta(s-x)+2\int_0^1\cos((s-x)t)\,dt=-2\pi\delta(s-x)+\frac{2\sin(s-x)}{s-x}.
$$
And, $\fonehat(s)=\intin e^{-ist}e^{ixt}\,dt=2\sin(s-x)/(s-x)$ so that 
$\fhat=\fonehat-\Psif''=
2\pi\delta_x$, in agreement with the usual formula.

Now let $g$ satisfy conditions ($\alpha$)-($\delta$) of Theorem~\ref{theoremexchange}.  Then
$\intinf\fhat g=\intinf f(t)\ghat(t)\,dt=2\pi\intinf\delta_xg$.  Since $g$ is
continuous, $g(x)=(2\pi)^{-1}\intinf e^{ixt}\ghat(t)\,dt$.  This then reproduces
the  usual inversion formula \eqref{inversionformula} for $L^1(\R)$ functions 
but with sufficient conditions
for $\ghat$ to be in $L^1(\R)$.
\end{example}

\begin{example}\label{examplekolmogorov}
Kolmogorov gave an example of a periodic $L^1$ function for which the inversion
integral diverges everywhere \cite{kolmogorov1926}.  Due to remarks on transference principles
\cite[p.~481]{grafakosmodern} there is a function $g\in L^1(\R)$ such that
$\lim_{R\to\infty}\int_{-R}^R e^{ixs}\ghat(s)\,ds$ diverges for each $x\in\R$.
Let $f=\ghat\in L^\infty(\R)$.  There is thus a uniformly continuous bounded
function $f$, with limits $0$ at $\pm\infty$, such 
that $\intinf e^{-ist}f(t)\,dt$ diverges for each $s\in\R$.
Definition~\ref{defnFT} then properly extends the notion of Fourier transform
beyond what the usual integral gives.
\end{example}

\begin{example}\label{examplecos}
Define $f(x)=\cos(a/x)$ for $a\not=0$.  Then $f\in L^\infty(\R)$ but $f$ is not
in any other $L^p(\R)$ space.  This function is also not of bounded variation.
The Fourier transform integral diverges when 
considered as a Henstock--Kurzweil integral or
distributional integral \cite{talvilaacrn,  talvilaLp}.  We have
$$
\Psif(s)=\intout e^{-ist}\cos\left(\frac{a}
{t}\right)\frac{dt}{t^2}=2\int_1^\infty\cos(st)\cos\left(\frac{a}
{t}\right)\frac{dt}{t^2}.
$$
The derived integral converges uniformly for all $\abs{s}\geq\eta>0$ so that
\begin{eqnarray}
\Psif'(s) & = & -2\int_1^\infty\sin(st)\cos\left(\frac{a}
{t}\right)\frac{dt}{t}\notag\\
 & = & -2\int_0^\infty\sin(st)\cos\left(\frac{a}
{t}\right)\frac{dt}{t} +2\int_0^1\sin(st)\cos\left(\frac{a}
{t}\right)\frac{dt}{t}\notag\\
 & = & -\pi\,{\rm sgn}(s)J_0(2\sqrt{\abs{as}}) +2\int_0^1\sin(st)\cos\left(\frac{a}
{t}\right)\frac{dt}{t}.\label{bessel}
\end{eqnarray}
The Bessel function integral appears as \cite[2.7(14)]{erdelyi}.  Since $J_0$
is continuous and $J_0(0)=1$ with $J_0'=-J_1$, the second
distributional derivative is then
$$
\Psif''(s)=-2\pi\delta(s)+\frac{\pi\sqrt{\abs{a}}J_1(2\sqrt{\abs{as}})}{\sqrt{\abs{s}}}
+\fonehat(s).
$$
And,
\begin{equation}
\fhat(s)=\fonehat(s)-\Psif''(s)=2\pi\delta(s)-
\frac{\pi\sqrt{\abs{a}}J_1(2\sqrt{\abs{as}})}{\sqrt{\abs{s}}}.\label{costransform}
\end{equation}
After a change of variables, this assigns a value to the divergent integrals
$$
\int_0^\infty\cos(st)\cos(a/t)\,dt=\int_0^\infty\cos(s/t)\cos(at)\frac{dt}{t^2}
=\pi\delta(s)-
\frac{\pi\sqrt{\abs{a}}J_1(2\sqrt{\abs{as}})}{2\sqrt{\abs{s}}}.
$$

We have not been able to find this transform in the literature.

The sine transform of the function $x\mapsto \sin(a/x)$ converges as a Henstock--Kurzweil
integral \cite[2.7(6)]{erdelyi}.   If we now let $g(x)=e^{ia/x}$ then
$$
\ghat(s)=\intinf\cos(st-a/t)\,dt=2\pi\delta(s)-\left\{\begin{array}{cl}
\frac{2\pi\sqrt{\abs{a}}J_1(2\sqrt{\abs{as}})}{\sqrt{\abs{s}}}, & as<0\\
0, & as\geq 0.
\end{array}
\right.
$$

Let $h(x)=x\sin(a/x)$.
A similar calculation gives
$$
{\hat h}(s)=2\pi a\delta(s)+\frac{\pi aJ_0(2\sqrt{\abs{as}})}{\abs{s}}
-\frac{3\pi \sqrt{\abs{a}}\,{\rm sgn}(a)J_1(2\sqrt{\abs{as}})}{2\abs{s}^{3/2}}.
$$

For $m\in\N$ define $f_m(x)=\cos^m(a/x)$.  The Fourier series of $\cos^m$ 
\cite[1.320]{gradshteyn} with \eqref{costransform} give
\begin{align*}
&{\hat f_{2n-1}}(s)  =  2\pi\delta(s)-\frac{\pi}{2^{2n-2}}\sum_{k=0}^{n-1}\binom{2n-1}{k}
\left|\frac{(2n-2k-1)a}{s}\right|^{1/2}J_1\left(2\left|(2n-2k-1)as\right|^{1/2}\right)\\
&{\hat f_{2n}}(s)  =  2\pi\delta(s)-\frac{\pi}{2^{2n-1}}\sum_{k=0}^{n-1}\binom{2n}{k}
\left|\frac{(2n-2k)a}{s}\right|^{1/2}J_1\left(2\left|(2n-2k)as\right|^{1/2}\right).
\end{align*}
\end{example}

\begin{example}\label{examplearctan}
Define $f(x)=\arctan(x/a)$ for $a\not=0$.  Then $f\in L^\infty(\R)$ but $f$ is not
in any other $L^p(\R)$ space.  The Fourier transform integral diverges when 
considered as a Henstock--Kurzweil integral or
distributional integral \cite{talvilaacrn,  talvilaLp}.  We have
$$
\Psif(s)=\intout e^{-ist}\arctan\left(\frac{t}
{a}\right)\frac{dt}{t^2}=-2i\int_1^\infty\sin(st)\arctan\left(\frac{t}
{a}\right)\frac{dt}{t^2}.
$$
The derived integral converges uniformly for all $\abs{s}\geq\eta>0$ so that
\begin{eqnarray*}
\Psif'(s) & = & -2i\int_0^\infty\cos(st)\arctan\left(\frac{t}
{a}\right)\frac{dt}{t} +2i\int_0^1\cos(st)\arctan\left(\frac{t}
{a}\right)\frac{dt}{t}
\\
 & = & -i\pi\,{\rm sgn}(a)\int_{\abs{as}}^\infty e^{-t}\frac{dt}{t}
+2i\int_0^1\cos(st)\arctan\left(\frac{t}
{a}\right)\frac{dt}{t}.
\end{eqnarray*}
See \cite[1.8(3)]{erdelyi}.  Then
$$
\Psif''(s)=\frac{i\pi\,{\rm sgn}(a)e^{-\abs{as}}}{s} 
-2i\int_0^1\sin(st)\arctan\left(\frac{t}
{a}\right)\,dt.
$$
And, for $a\not=0$,
$$\fhat(s)=-\frac{i\pi\,{\rm sgn}(a)e^{-\abs{as}}}{s}.
$$
This transform was computed in a different manner in \cite{talvilaSlovaca}.

Now let $g(x)=\arctan(a/x)=(\pi/2){\rm sgn}(ax)-\arctan(x/a)$.   Then
$$
\ghat(s)=-\frac{i\pi\,{\rm sgn}(a)}{s}+\frac{i\pi\,{\rm sgn}(a)e^{-\abs{as}}}{s}
=-\frac{2i\pi}{\abs{s}}e^{-\abs{as}/2}\sinh((as)/2).
$$
\end{example}

\section{Inversion}\label{sectioninversion}

A classical inversion theorem states that if $f,\fhat\in L^1(\R)$ then
\begin{equation}
f(x)=\frac{1}{2\pi}\intinf e^{isx}\fhat(s)\,ds \text{ for each } x\in\R.
\label{inversionformula}
\end{equation}

In general, there is no such inversion formula for tempered distributions or for 
functions in $L^p(\R)$ when $p>2$.  A special
case is that of distributions with \L ojasiewicz point values.
See \cite[II.5]{pilipovicstankovicvindas} for interpretation of this formula in terms of
\L ojasiewicz point values of distributions and summability.

In this section a summability kernel (approximate identity) is used to prove an
inversion theorem. 
Pointwise inversion holds at Lebesgue points and thus almost
everywhere.  Inversion holds within the uniform norm when $f$ is uniformly
continuous on $\R$. 

\begin{theorem}\label{theoreminversion}
Let $f\in L^\infty(\R)$.  Let $\psi\in L^1(\R)$ such that $\intinf\psi(t)\,dt=1$.
Assume (i) $\psihat$ is absolutely continuous,
(ii) $\psihat\in L^1(\R)$, (iii) $\psihat\,'\in L^1(\R)$ and (iv) $\psihat\,'$ is of bounded variation.
Write $e_x(t)=e^{xt}$.  For each $a>0$ let $\psi_a(x)=\psi(x/a)/a$.
Define $I_a[f](x)=(1/(2\pi))\intinf e_{ix}\psiahat\fhat$.\\
(a) If $f$ is uniformly continuous on $\R$ then
$\lim_{a\to 0^+}\norm{f-I_a[f]}_\infty=0$.\\
(b) If there are $c>0$ and $\delta>1$ such that 
$\abs{\psi(x)}\leq c(\abs{x}+1)^{-\delta}$ for all $x\in\R$ then
$\lim_{a\to 0^+}I_a[f](x)=f(x)$ at each Lebesgue point of $f$, in particular,
for almost all $x$ and at each point of continuity of $f$.
\end{theorem}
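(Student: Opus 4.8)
The plan is to use the exchange theorem to turn the distributional integral defining $I_a[f]$ into an ordinary convolution $f\ast\psi_a$, and then to invoke the classical theory of approximate identities. Fix $x\in\R$ and $a>0$ and set $g(t)=e_{ix}(t)\psiahat(t)$; by the scaling rule $\psiahat(t)=\psihat(at)$, so $g(t)=e^{ixt}\psihat(at)$ and $I_a[f](x)=(2\pi)^{-1}\langle\fhat,g\rangle$. To apply Theorem~\ref{theoremexchange} I must verify that $g$ satisfies ($\alpha$)--($\delta$), for which hypotheses (i)--(iv) are tailor-made. Absolute continuity of $g$ comes from (i); $g\in L^1(\R)$ from (ii), since $\abs{g}=\abs{\psihat(a\,\cdot\,)}$; and, since $g'(t)=e^{ixt}\bigl(ix\,\psihat(at)+a\,\psihat\,'(at)\bigr)$, also $g'\in L^1(\R)$ by (ii) and (iii).

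The crux, and the step I expect to be the main obstacle, is condition ($\delta$): that $g'$ is of bounded variation, despite the factor $e^{ixt}$ having infinite variation on $\R$. Write $g'=e_{ix}w$ with $w(t)=ix\,\psihat(at)+a\,\psihat\,'(at)$. Here $w$ has bounded variation --- its first term is a constant multiple of $\psihat(a\,\cdot\,)$, which has finite variation $\intinf\abs{\psihat\,'}$ by (i) and (iii), while its second term is of bounded variation by (iv) --- and $w\in L^1(\R)$ by (ii) and (iii). Since $e_{ix}$ is smooth, the distributional derivative of the product $e_{ix}w$ is the finite measure $ix\,e^{ixt}w(t)\,dt+e^{ixt}\,dw(t)$, so
$$
{\rm var}(g')\leq\abs{x}\norm{w}_1+{\rm var}(w)<\infty.
$$
This is precisely where the $L^1$ hypotheses (ii), (iii) must be used alongside the bounded variation hypothesis (iv): the oscillation of $e_{ix}$ contributes a term $\abs{x}\norm{w}_1$ that is finite only because $w$ is integrable.

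With ($\alpha$)--($\delta$) in hand, Theorem~\ref{theoremexchange} gives $I_a[f](x)=(2\pi)^{-1}\intinf f(t)\ghat(t)\,dt$, an ordinary Lebesgue integral. I then compute $\ghat$ explicitly. Since $\ghat(u)=\intinf e^{-i(u-x)t}\psihat(at)\,dt$, a change of variables together with the classical inversion identity $\widehat{\psihat}=2\pi\psi(-\,\cdot\,)$ --- valid because $\psi,\psihat\in L^1(\R)$ --- gives $\ghat(u)=(2\pi/a)\psi((x-u)/a)$. Substituting yields the key reduction
$$
I_a[f](x)=\frac1a\intinf f(u)\psi\Bigl(\frac{x-u}{a}\Bigr)\,du=(f\ast\psi_a)(x),
$$
where $\psi_a(y)=\psi(y/a)/a$ obeys $\intinf\psi_a=1$ and $\norm{\psi_a}_1=\norm{\psi}_1$, so $\{\psi_a\}$ is a genuine approximate identity.

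It then remains to run the standard convergence arguments on $I_a[f](x)-f(x)=\intinf\bigl(f(x-y)-f(x)\bigr)\psi_a(y)\,dy$. For (a), I split this integral over $\abs{y}<\eta$ and $\abs{y}\geq\eta$: uniform continuity of $f$ makes the first piece small uniformly in $x$, while $\int_{\abs{y}\geq\eta}\abs{\psi_a}=\int_{\abs{z}\geq\eta/a}\abs{\psi}\to0$ as $a\to0^+$ controls the second, giving $\norm{f-I_a[f]}_\infty\to0$. For (b), the bound $\abs{\psi(x)}\leq c(\abs{x}+1)^{-\delta}$ with $\delta>1$ supplies a radially decreasing integrable majorant for $\psi$, and the classical theorem on approximate identities possessing such a majorant yields $I_a[f](x)\to f(x)$ at every Lebesgue point of $f$, hence almost everywhere and at every point of continuity.
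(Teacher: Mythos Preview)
Your proof is correct and follows essentially the same route as the paper: verify that $g=e_{ix}\psiahat$ satisfies the hypotheses of the exchange theorem, apply that theorem to get $I_a[f](x)=(2\pi)^{-1}\langle f,\ghat\rangle$, compute $\ghat$ via $L^1$ inversion to recognise $I_a[f]=f\ast\psi_a$, and then invoke the classical approximate-identity theorems (the paper simply cites Folland's Theorems~8.14 and 8.15 for the last step). Your treatment is in fact more careful than the paper's on the point you flagged as the crux: the paper merely asserts that conditions ($\alpha$)--($\delta$) ``also apply to $\psiahat$ and hence to'' $e_{ix}\psiahat$, whereas you explicitly justify ($\delta$) by bounding ${\rm var}(e_{ix}w)\leq\abs{x}\norm{w}_1+{\rm var}(w)$, which is exactly where the $L^1$ hypotheses (ii), (iii) enter alongside (iv).
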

\begin{proof}
The conditions of Theorem~\ref{theoremexchange} also apply to $\psiahat$ and
hence to 
the function $x\mapsto e^{ixt}\psiahat(x)$.  The exchange theorem then gives
$I_a[f](x)=\langle \fhat,e_{ix}\psiahat\rangle/(2\pi)=
\langle f,\widehat{e_{ix}\psiahat}\rangle/(2\pi)$.  
From
\eqref{inversionformula} then 
$I_a[f](x)=f\ast \psi_a$.  Part (a) follows from \cite[Theorem~8.14]{folland}
and part (b) follows from \cite[Theorem~8.15]{folland}.
\end{proof}
\begin{corollary}\label{corollaryinversion}
Let $p_n(x)=x^n$.
If $\psi\in C^1(\R)$ such
that $\psi'$ is absolutely continuous, $p_2\psi\in L^1(\R)$, $\intinf\psi(t)\,dt=1$,
 $p_2\psi'\in L^1(\R)$
and $p_2\psi''\in L^1(\R)$, 
then $\psi$ satisfies the hypotheses of the Theorem and the inversion follows.
\end{corollary}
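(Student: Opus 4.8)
The plan is to verify directly that the four conditions (i)--(iv) imposed on $\psihat$ in Theorem~\ref{theoreminversion} are consequences of the moment and smoothness hypotheses on $\psi$. Two facts drive everything. First, the differentiation rule $(\psihat)^{(k)}(s)=\widehat{(-it)^k\psi}(s)$ holds whenever $p_k\psi\in\Lone$, since then one may differentiate under the integral sign. Second, if $g\in C^1(\R)$ with $g'$ absolutely continuous and $g,g',g''\in\Lone$, then $\ghat\in\Lone$: the functions $g$ and $g'$ tend to $0$ at $\pm\infty$ (an absolutely continuous $\Lone$ function whose derivative lies in $\Lone$ vanishes at infinity, as already used in Theorem~\ref{theoremexchange}), so two integrations by parts give $\ghat(s)=-\widehat{g''}(s)/s^2$ for $s\neq0$; because $\norm{\widehat{g''}}_\infty\le\norm{g''}_1$ and $\ghat$ is continuous, the estimate $\abs{\ghat(s)}\le\norm{g''}_1/s^2$ for large $\abs{s}$ forces $\ghat\in\Lone$. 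I will refer to this as the decay lemma.

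First I would settle the integrability bookkeeping. Since $\psi\in C^1(\R)$ it is bounded on compact sets, and for $\abs{t}>1$ we have $\abs{\psi(t)}\le t^2\abs{\psi(t)}$; combined with $p_2\psi\in\Lone$ this gives $\psi\in\Lone$, and the identical argument gives $\psi',\psi''\in\Lone$. The interpolation bound $\abs{t}\le(1+t^2)/2$ then produces $p_1\psi,p_1\psi',p_1\psi''\in\Lone$ from the zeroth and second moments already in hand. Thus $p_k\psi^{(j)}\in\Lone$ for all $0\le j,k\le2$.

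With these available the four conditions follow in turn. Condition (i): since $p_1\psi\in\Lone$, the differentiation rule makes $\psihat\in C^1(\R)$, hence absolutely continuous. Condition (ii): apply the decay lemma to $g=\psi$. Condition (iii): write $(\psihat)'=-i\,\widehat{p_1\psi}$ and apply the decay lemma to $g=p_1\psi$, whose second derivative $2\psi'+p_1\psi''$ lies in $\Lone$. Condition (iv): the function $(\psihat)'=-i\,\widehat{p_1\psi}$ is itself in $C^1(\R)$, since $p_1\psi\in\Lone$ and its first moment $p_2\psi\in\Lone$, hence it is absolutely continuous; moreover $(\psihat)''=-\widehat{p_2\psi}\in\Lone$ by the decay lemma applied to $g=p_2\psi$, whose second derivative $2\psi+4p_1\psi'+p_2\psi''$ lies in $\Lone$. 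An absolutely continuous function with integrable derivative is of bounded variation, so $\psihat\,'$ is of bounded variation. All hypotheses of Theorem~\ref{theoreminversion} now hold and the inversion conclusions follow.

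I expect the content here to be almost entirely organizational: the real work is tracking which products $p_k\psi^{(j)}$ are integrable and, at each invocation of the decay lemma, confirming that the boundary terms in the integrations by parts genuinely vanish. The one step that is not pure bookkeeping is the interpolation inequality that manufactures the odd moments $p_1\psi,p_1\psi',p_1\psi''$ from the given even ones; everything else is routine once the decay lemma has been isolated.
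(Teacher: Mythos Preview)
Your proof is correct and follows essentially the same route as the paper: establish that $p_k\psi^{(j)}\in\Lone$ for $0\le j,k\le 2$, then verify (i)--(iv) by repeatedly invoking a criterion for a Fourier transform to lie in $\Lone$. The paper phrases that criterion as conditions ($\alpha$)--($\delta$) of Theorem~\ref{theoremexchange} (citing an external reference for the implication $\ghat\in\Lone$), while you isolate the same mechanism as a self-contained decay lemma via two integrations by parts; since every $g$ in play has $g'$ absolutely continuous with $g''\in\Lone$, the two formulations coincide.
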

\begin{proof}
We need to prove that $\psihat$ satisfies conditions (i)-(iv) in the Theorem.
Note conditions ($\alpha$)-($\delta$) in Theorem~\ref{theoremexchange} that guarantee
a Fourier transform is in $L^1(\R)$.

(i) Differentiation under the integral sign gives $\psihat''(s)=-\widehat{p_2\psi}$.  Hence,
$\psihat\in C^2(\R)$ and therefore absolutely continuous.
This also shows that $\psihat$, $\psihat'$ and $\psihat''$ are locally integrable.

(ii) To show $\psihat\in L^1(\R)$ we need, ($\alpha$) $\psi$ absolutely continuous and 
($\beta$) $\psi\in L^1(\R)$.  These follow from the smoothness of $\psi$ and 
$p_2\psi\in L^1(\R)$. ($\gamma$) Since $p_2\psi'\in L^1(\R)$ then $\psi'\in L^1(\R)$.
($\delta$) The variation of $\psi'$ is ${\rm var}(\psi')=\intinf\abs{\psi''(t)}\,dt
<\infty$.

(iii) Show $\psihat'\in L^1(\R)$.  Note that $\psihat'=-i\widehat{p_1\psi}$.
($\alpha$) $p_1\psi$ is absolutely continuous.  ($\beta$) $p_1\psi\in L^1(\R)$.
($\gamma$) $(p_1\psi)'=p_1\psi'+\psi\in L^1(\R)$.
($\delta$) $(p_1\psi)'$ is of bounded variation since $(p_1\psi)''=p_1\psi''+2\psi'\in
L^1(\R)$.

(iv) Show $\psihat'$ is of finite variation.  Since $\psihat\in C^2(\R)$ this follows
if $\psihat''=-\widehat{p_2\psi}\in L^1(\R)$. ($\alpha$) $p_2\psi$ is absolutely continuous.  
($\beta$) $p_2\psi\in L^1(\R)$.
($\gamma$) $(p_2\psi)'=p_2\psi'+2p_1\psi\in L^1(\R)$.
($\delta$) $(p_2\psi)'$ is of bounded variation since $(p_2\psi)''=p_2\psi''+4p_1\psi'
+2\psi\in
L^1(\R)$.
\end{proof}
\begin{example}\label{examplekernels}
The three commonly used summability kernels,\\
Fej\'er--Ces\`aro:\quad $\psi_a(t)=2a\sin^2[t/(2a)]/(\pi t^2)$ with
${\widehat \psi_a}(s)=(1-a\abs{s})\chi_{[-1/a,1/a]}(s)/(2\pi)$,\\
Poisson--Abel:\quad $\psi_a(t)=2a/[\pi(t^2+a^2)]$ with 
${\widehat \psi_a}(s)=e^{-a\abs{s}}/\pi$,\\ 
Weierstrass--Gauss:\quad $\psi_a(t)=e^{-t^2/(4a^2)}/(2\sqrt{\pi}a)$ with
${\widehat\psi_a}(s)=e^{-a^2s^2}/(2\pi)$,\\
satisfy the conditions of Theorem~\ref{theoreminversion}.
While,\\
Dirichlet kernel:\quad $\psi_a(t)=\sin(t/a)/(\pi t)$ with
 ${\widehat\psi_a}(s)=\chi_{[-1/a,1/a]}(s)/(2\pi)$\\
does not.

Of these, only the Weierstrass kernel satisfies the hypotheses of 
Corollary~\ref{corollaryinversion}.
\end{example}

\section{Convolution}\label{sectionconvolution}

A classical result is that if $f,g\in L^1(\R)$ then $\widehat{f\ast g}=\fhat\ghat$.
The exchange theorem leads to a type of weak version of this result,
$\intinf \widehat{f\ast g}h=\intinf \fhat\ghat\,h$ and
$\intinf \fhat g\ast h=\intinf f(t)\ghat(t)\hhat(t)\,dt$, where $f\in L^\infty(\R)$.
As with theorems of this type, roughly speaking, if a proof requires less smoothness
of $g$ it requires more smoothness of $h$, and vice versa.  There are then various
versions of such theorems.  In the versions given, we have tried to 
choose the simplest sets of hypotheses.
Proving the theorems requires finding sufficient conditions for a Fourier transform
to be in $L^1(\R)$.  This is done using Theorem~\ref{theoremexchange}.

Convolution theorems for distributions typically have severe restrictions on supports.
For example, \cite{folland, friedlanderjoshi, zemanian}.
The theorems below have no restriction on supports.

\begin{theorem}\label{theoremconvolution1}
Let $f\in L^\infty(\R)$. Write $p_n(x)=x^n$.  Let $g$ be
absolutely continuous such that
(a) $g'\in L^1(\R)$,
(b) $g'$ is of bounded variation,
(c) $p_2g\in L^1(\R)$.
Let $h\in L^1(\R)\cap C^1(\R)$ such that $h$ and $h'$ are of bounded variation
and $h'$ is absolutely continuous.
Then $\intinf \widehat{f\ast g}h=\intinf\fhat\ghat\,h$.
\end{theorem}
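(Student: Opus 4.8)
The plan is to evaluate both sides as ordinary Lebesgue integrals against $f$ by applying the Exchange Theorem (Theorem~\ref{theoremexchange}) twice, once with $f\ast g$ as the bounded function and once with $f$ itself, and then to reconcile the two integrals with a single use of the Fubini--Tonelli theorem and the $L^1$ inversion formula \eqref{inversionformula}. First I would record the integrability facts that make the statement meaningful. Since $g$ is absolutely continuous with $g'\in L^1(\R)$, its limits at $\pm\infty$ are finite, and $p_2g\in L^1(\R)$ forces them to vanish and controls the tails, so $g\in L^1(\R)$; as $\abs{t}\le(1+t^2)/2$ we also get $p_1g\in L^1(\R)$. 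Hence $\ghat\in C^2(\R)$ with $\ghat,\ghat',\ghat''$ bounded, and $\ghat\in L^1(\R)$ by Theorem~\ref{theoremexchange}. The hypotheses on $h$ give $h'\in L^1(\R)$ and $h''\in L^1(\R)$ through the identities ${\rm var}(h)=\intinf\abs{h'(t)}\,dt$ and ${\rm var}(h')=\intinf\abs{h''(t)}\,dt$ valid for absolutely continuous functions, so $h$ satisfies ($\alpha$)--($\delta$) and $\hhat\in L^1(\R)$. Finally $f\ast g\in L^\infty(\R)$ since $f\in L^\infty(\R)$ and $g\in L^1(\R)$.

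For the left-hand side I would treat $f\ast g\in L^\infty(\R)$ as the bounded function and $h$ as the multiplier; the Exchange Theorem gives
$$
\intinf\widehat{f\ast g}\,h=\langle\widehat{f\ast g},h\rangle=\langle f\ast g,\hhat\rangle=\intinf(f\ast g)(t)\hhat(t)\,dt,
$$
an ordinary integral because $f\ast g\in L^\infty(\R)$ and $\hhat\in L^1(\R)$. Expanding the convolution and applying Fubini--Tonelli, justified by $\norm{f}_\infty\norm{g}_1\norm{\hhat}_1<\infty$, rewrites this as $\intinf f(u)\bigl(\intinf g(t-u)\hhat(t)\,dt\bigr)\,du$. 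For the right-hand side I would instead pair $\fhat$ with the product $\ghat h$, obtaining $\intinf\fhat\ghat\,h=\langle\fhat,\ghat h\rangle=\langle f,\widehat{\ghat h}\rangle=\intinf f(u)\widehat{\ghat h}(u)\,du$. The theorem therefore reduces to the pointwise identity $\intinf g(t-u)\hhat(t)\,dt=\widehat{\ghat h}(u)$, which I would prove by inserting $g(t-u)=(2\pi)^{-1}\intinf\ghat(\xi)e^{i\xi(t-u)}\,d\xi$ and $h(\xi)=(2\pi)^{-1}\intinf e^{i\xi t}\hhat(t)\,dt$ and interchanging the order of integration; both inversions are licensed by \eqref{inversionformula}, since $g,\ghat,h,\hhat\in L^1(\R)$ with $g$ and $h$ continuous.

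The main obstacle is not this formal chain but the verification that the Exchange Theorem genuinely applies to the \emph{product} $\ghat h$, that is, that $\ghat h$ satisfies ($\alpha$)--($\delta$), so that $\widehat{\ghat h}\in L^1(\R)$ and $\langle\fhat,\ghat h\rangle$ is computed by Proposition~\ref{propA2}. This is where the decay hypothesis $p_2g\in L^1(\R)$ is consumed: differentiating $\ghat h$ twice produces $(\ghat h)''=\ghat''h+2\ghat'h'+\ghat h''$, and each term must be forced into $L^1(\R)$ by pairing a bounded factor with an integrable one, namely $\ghat''$ (bounded) with $h\in L^1(\R)$, $\ghat'$ (bounded) with $h'\in L^1(\R)$, and $\ghat$ (bounded) with $h''\in L^1(\R)$. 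Were $\ghat$ merely bounded rather than twice continuously differentiable with bounded derivatives, the term $\ghat''h$ would be out of reach, which is exactly why the extra moment condition on $g$ is imposed. The same bounded-times-integrable bookkeeping gives $\ghat h\in L^1(\R)$ and $(\ghat h)'\in L^1(\R)$, and since $(\ghat h)'$ is then absolutely continuous with derivative in $L^1(\R)$ it is of bounded variation; the parallel but easier check for $h$ alone legitimises the left-hand reduction. Once these hypotheses are confirmed, the two displayed reductions and the inversion identity close the argument.
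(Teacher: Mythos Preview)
Your proof is correct and follows essentially the same route as the paper: apply the Exchange Theorem to $\widehat{f\ast g}$ against $h$ and to $\fhat$ against $\ghat h$, verify that $\ghat h$ satisfies ($\alpha$)--($\delta$) via the bounded-times-integrable pairings you describe, and match the resulting ordinary integrals through the identity $\widehat{\ghat h}(u)=\intinf g(t-u)\hhat(t)\,dt$. The only cosmetic differences are that the paper splits $f=f_1+f_2$ and disposes of the compactly supported piece by the classical $L^1$ result (an unnecessary detour, since the Exchange Theorem already covers all of $L^\infty$), and that the paper derives the key identity by a single Fubini step using only $g,h\in L^1(\R)$, whereas you invoke the inversion formula for $g$ and $h$, which costs the extra observation $\ghat\in L^1(\R)$ but is equally valid here.
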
 
\begin{proof}
Use the notation of Definition~\ref{defnFT}.  

Since $g$ is absolutely continuous, (c) implies $g\in L^1(\R)$.  With
$f_1,g,h\in L^1(\R)$ we then have 
$\intinf \widehat{f_1\ast g}h=\intinf\fonehat\ghat\,h$.

It is then only
necessary to prove the formula for $f_2$.

With $f_2\in L^\infty(\R)$ we have $f_2\ast g\in L^\infty(\R)$.  Since
$h\in C^1(\R)$ the variation of $h$ is given by $\intinf\abs{h'(t)}\,dt<\infty$.  Then
$h$ satisfies the conditions of Theorem~\ref{theoremexchange} and $\hhat\in L^1(\R)$.
Using the
Fubini--Tonelli theorem,
\begin{eqnarray}
\intinf \widehat{f_2\ast g}h & = & \intinf f_2\ast g(t)\hhat(t)\,dt
 =  \intinf\int_{\abs{s}>1} f_2(s)g(t-s)\hhat(t)\,ds\,dt\notag\\
 & = & \int_{\abs{s}>1} f_2(s)\intinf g(t-s)\hhat(t)\,dt\,ds.\label{conv1}
\end{eqnarray}

Now show the conditions of Theorem~\ref{theoremexchange} apply to $\ghat h$.
Note that $p_1g\in L^1(\R)$.  
Differentiating
under the integral sign then shows $\ghat\in C^2(\R)$.  Hence, $\ghat h$ is absolutely
continuous.  Note that $\intinf\abs{\ghat(s)h(s)}\,ds\leq\norm{\ghat}_\infty\norm{h}_1
\leq\norm{g}_1\norm{h}_1$. And,
\begin{eqnarray*}
\norm{(\ghat h)'}_1 & = & 
\intinf\abs{\ghat'(s)h(s)+\ghat(s)h'(s)}\,ds
 \leq  \norm{\ghat'}_\infty\norm{h}_1+\norm{\ghat}_\infty{\rm var}(h)\\
   & \leq &  \norm{p_1g}_1\norm{h}_1+\norm{g}_1{\rm var}(h).
\end{eqnarray*}
Since $\ghat\in C^2(\R)$ and $h'$ is absolutely continuous, then
\begin{eqnarray*}
{\rm var}[(\ghat h)'] & = & \intinf\abs{\ghat''(s)h(s)+2\ghat'(s)h'(s)+\ghat(s)h''(s)}\,ds\\
 & \leq & \norm{\ghat''}_\infty\norm{h}_1+2\norm{\ghat'}_\infty\norm{h'}_1+
\norm{\ghat}_\infty\norm{h''}_1\\
 & \leq & \norm{p_2g}_1\norm{h}_1+2\norm{p_1g}_1{\rm var}(h)+\norm{g}_1{\rm var}(h').
\end{eqnarray*}
By Theorem~\ref{theoremexchange} then, $\intinf\ftwohat\ghat h=\intout f_2(t)\widehat{\ghat h}(t)
\,dt$.  

Since $g,h\in L^1(\R)$, we get 
$$
\widehat{\ghat h}(t)=\intinf g(u-t)\intinf e^{-ius}h(s)\,ds\,du = \intinf g(u-t)\hhat(u)\,du,
$$
by the Fubini--Tonelli theorem.

Now we have
$$
\intinf\ftwohat\ghat\,h  = 
\intout f_2(t)\widehat{\ghat h}(t)
\,dt
  =  \intout f_2(t) \intinf g(u-t)\hhat(u)\,du\,dt,
$$
in agreement with \eqref{conv1}.
\end{proof}

\begin{theorem}\label{theoremconvolution2}
Let $f\in L^\infty(\R)$.  Let $g\in L^1(\R)$. Let $h\in C^1(\R)$ such that
$h'$ is absolutely continuous and
$h,h',h''\in L^1(\R)$.  Then $\intinf \fhat g\ast h=\intinf f(t)\ghat(t)\hhat(t)\,dt$.
\end{theorem}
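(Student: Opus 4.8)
The plan is to reduce the identity to the Exchange Theorem (Theorem~\ref{theoremexchange}) applied with test function $g\ast h$, and then to invoke the classical $L^1$ convolution theorem $\widehat{g\ast h}=\ghat\hhat$. The key observation is that the convolution $g\ast h$ inherits its smoothness from $h$, so that even though $g$ is merely integrable, $g\ast h$ is smooth enough to satisfy conditions ($\alpha$)--($\delta$).

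First I would establish the regularity of $g\ast h$. Since $g\in L^1(\R)$ and $h,h',h''\in L^1(\R)$, Young's inequality gives $g\ast h,\, g\ast h',\, g\ast h''\in L^1(\R)$. Because convolution commutes with distributional differentiation, the distributional derivatives of $g\ast h$ are $(g\ast h)'=g\ast h'$ and $(g\ast h)''=g\ast h''$, both in $L^1(\R)$. Hence $g\ast h$ has an absolutely continuous representative whose derivative $g\ast h'$ is itself absolutely continuous, and ${\rm var}[(g\ast h)']=\norm{g\ast h''}_1<\infty$. This verifies ($\alpha$) absolute continuity, ($\beta$) $g\ast h\in L^1(\R)$, ($\gamma$) $(g\ast h)'\in L^1(\R)$, and ($\delta$) $(g\ast h)'$ of bounded variation, so Theorem~\ref{theoremexchange} applies to $g\ast h$ and gives $\widehat{g\ast h}\in L^1(\R)$ together with $\langle\fhat,g\ast h\rangle=\langle f,\widehat{g\ast h}\rangle$.

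It then remains to identify the right-hand side. Since $g,h\in L^1(\R)$, the classical convolution theorem gives $\widehat{g\ast h}=\ghat\hhat$ pointwise. The hypotheses on $h$ themselves satisfy ($\alpha$)--($\delta$), so $\hhat\in L^1(\R)$, while $\norm{\ghat}_\infty\leq\norm{g}_1$ shows $\ghat\hhat\in L^1(\R)$; therefore the pairing $\langle f,\ghat\hhat\rangle$ with $f\in L^\infty(\R)$ is the ordinary Lebesgue integral $\intinf f(t)\ghat(t)\hhat(t)\,dt$. Combining the two displays yields $\intinf\fhat g\ast h=\intinf f(t)\ghat(t)\hhat(t)\,dt$. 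The argument has no deep obstacle; the one point that genuinely needs care is the justification that $g\ast h$ is twice differentiable with $(g\ast h)^{(k)}=g\ast h^{(k)}\in L^1(\R)$, since this is exactly what produces the bounded-variation condition ($\delta$) required to invoke the Exchange Theorem.
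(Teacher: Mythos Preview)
Your argument is correct and follows essentially the same route as the paper: verify that $g\ast h$ satisfies the hypotheses ($\alpha$)--($\delta$) of Theorem~\ref{theoremexchange}, apply the Exchange Theorem to obtain $\langle\fhat,g\ast h\rangle=\langle f,\widehat{g\ast h}\rangle$, and then invoke the classical identity $\widehat{g\ast h}=\ghat\hhat$. The paper differs only in that it first splits $f=f_1+f_2$ and disposes of the $f_1$ part via classical $L^1$ theory before applying the Exchange Theorem to $f_2$; your direct application to all of $f$ is slightly cleaner since the Exchange Theorem already covers the full $L^\infty$ case.
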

\begin{proof}
Use the notation of Definition~\ref{defnFT}.  

Since $f_1,g,h\in L^1(\R)$ we have 
$\intinf \fonehat g\ast h=\intin f(t)\ghat(t)\hhat(t)\,dt$.  It is then only
necessary to prove the formula for $f_2$.

Show that $g\ast h$ satisfies the conditions of Theorem~\ref{theoremexchange}.  
Differentiation under the integral sign gives
$(g\ast h)'(s) =g\ast h'(s)$, which shows that $g\ast h\in C^1(\R)$ and hence 
absolutely continuous.  And, $\norm{g\ast h}_1\leq \norm{g}_1\norm{h}_1$.  Similarly,
$\norm{(g\ast h)'}_1\leq \norm{g}_1\norm{h'}_1$.  The variation of $(g\ast h)'$
is given by ${\rm var}[(g\ast h)']=\intinf\abs{(g\ast h)''(s)}\,ds\leq
\norm{g}_1\norm{h''}_1$.

The exchange theorem then gives
$$
\intinf \ftwohat g\ast h=\intout f(t)\widehat{g\ast h}(t)\,dt=\intout f(t)\ghat(t)\hhat(t)\,dt.
$$
\end{proof}

\end{document}